\newif\iflong
\newtheorem{theorem}{Theorem}
\newtheorem{remark}{Remark}
\newtheorem{corollary}{Corollary}
\newtheorem{lemma}{Lemma}
\newtheorem{definition}{Definition}
\newtheorem{proposition}{Proposition}
\newtheorem{assumption}{Assumption}
\newcommand{\reals}{\ensuremath{\mathbb{R}}}
\newcommand{\naturals}{\ensuremath{\mathbb{N}}}
\newcommand{\D}{\displaystyle}
\title{Sampling-Based Resolution-Complete Algorithms for Safety Falsification of Linear Systems}
\author {Amit Bhatia \thanks{Amit Bhatia is with the Department of Mechanical and Aerospace Engineering,
University of California at Los Angeles, Los Angeles, California
90095, {\tt\small abhatia@ucla.edu}} \and Emilio Frazzoli
%
\thanks{Emilio Frazzoli is with the Department of Aeronautics and
  Astronautics, Massachusetts Institute of Technology, Cambridge,
  Massachusetts, 02139,  {\tt\small frazzoli@mit.edu}}}
\date{}
\begin{document}
\maketitle
\pagestyle{empty}
\begin{abstract}
In this paper, we describe a novel approach for checking safety specifications of a dynamical system with exogenous inputs over infinite time horizon that is guaranteed to terminate in finite time with a conclusive answer. We introduce the notion of resolution completeness for analysis of safety falsification algorithms and propose sampling-based resolution-complete algorithms for safety falsification of linear time-invariant discrete time systems over infinite time horizon. The algorithms are based on deterministic incremental search procedures, exploring the reachable set for feasible counter examples to safety at increasing resolution levels of the input. Given a target resolution of inputs, the algorithms are guaranteed to terminate either with a reachable state that violates the safety specification, or prove that no input exists at the given resolution that violates the specification.
\end{abstract}
\section{Introduction}
\subsection{Background}
 Simulation-based techniques for formally verifying properties (called specifications) of discrete, continuous and hybrid systems, have come under great deal of attention recently. The motivation to use such techniques arises from the fact that most of the real-world systems are quite complex and operate in the presence of unknown external disturbances. As a result, verifying that each and every state of a system satisfies a given specification may be impractical, or in general even impossible. The problem of finding the set of all states the system can reach (called as the reachable set), based on its dynamics and initial conditions, is known as the {\em reachability problem} in literature. For continuous and hybrid systems, this problem is in general known to be undecidable~\cite{Kesten.Pnueli.ea:93,Alur.Courcoubetis.ea:95}. An important class of specifications are safety
specifications that describe the properties that the state of a system should satisfy, to be considered safe. For analyzing safety specifications of a system, a wide variety of methods have 
been proposed in~\cite{Henzinger.Ho:95,Alur.Dang.ea:02,Prajna.Jadbabaie:04,Chutinan.Krogh:99,Dang.Asarin.ea.:02,Kurzhanski.Varaiya:00,Mitchell.Tomlin:00,Girard:05,Ratschan.She:07}.

Most of these methods attempt to verify safety of a given system by over approximating the actual
reachable set. Hence, they are liable to generate a spurious counter
example which violates a specification, but is not a feasible
trajectory~\cite{Krogh.Silva.ea.:01}.
Even though refining the abstraction is usually possible, there is in
general no guarantee that the process of successive refinements would stop in
finite time~\cite{Krogh.Silva.ea.:01}. As a result, such methods can only verify safety of a system but will be inconclusive with regard to disproving it. Safety of a given system can be disproved only by working with either the actual reachable set, or, by giving a feasible counter example (constructed, e.g., using a simulation-based method). 
\subsection{Sampling-based algorithms for safety falsification}
\label{sec:mpd vs falsification}
To answer the complementary question of safety falsification, sampling-based incremental search algorithms have been proposed by us, and others, in~\cite{Bhatia.Frazzoli:04,Belta.Esposito.Kumar:05,Donze.Maler:07,Nahhal.Dang:07}, that are based on similar algorithms used in robotics~\cite{LaValle.Kuffner:01}. These algorithms try to falsify safety of the system quickly, but they are  only {\em probabilistically-complete} (see~\cite{LaValle.Kuffner:01}). This means that, the algorithms will find a counter example (if one exists) with probability 1, if they are
allowed to run forever. However, if they are terminated before a counter example is found, then they become inconclusive.\iflong One such instance is shown in Fig.~\ref{fig:pc vs rc intro}, at the left. The trajectories are generated using Rapidly Exploring Random Tree~\cite{LaValle.Kuffner:01}, a 
probabilistically-complete algorithm, for a point mass moving with bounded velocity in
two dimensions starting from a set containing origin. Terminating the search procedure before the
unsafe set is reached leads to the wrong
conclusion that the system is safe. Note that the samples are points ({\em zero volume sets}) and the sampling procedure is randomized.\fi
\iflong
\begin{figure}
\begin{minipage}[t]{0.5\textwidth}
\begin{center}
\includegraphics[scale=0.28,angle=0]{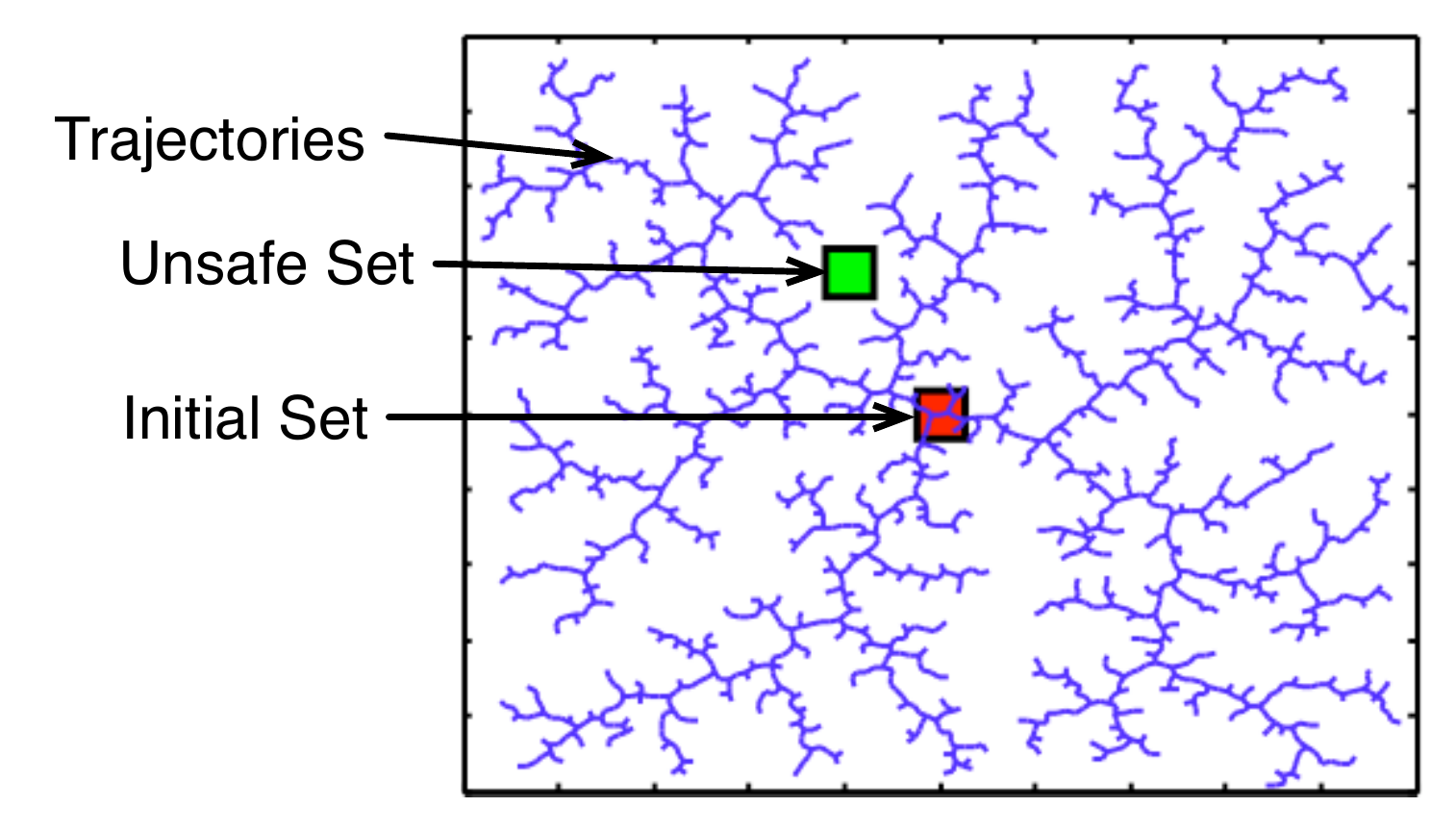}\\
\end{center}
\end{minipage}
\begin{minipage}[t]{0.5\textwidth}
\begin{center}
\includegraphics[scale=0.28,angle=0]{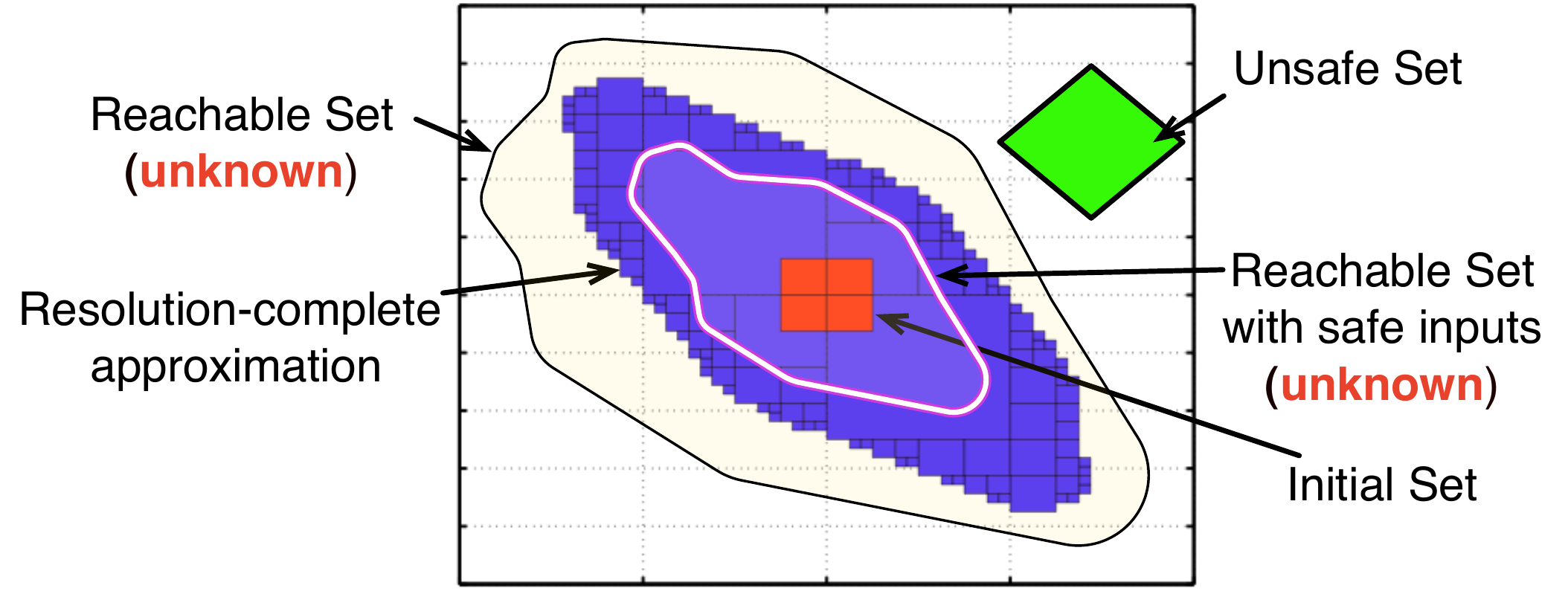}\\
\end{center}
\end{minipage}
\caption{Probabilistic vs resolution completeness}
\label{fig:pc vs rc intro}
\end{figure}
\fi

 To analyze the completeness properties
 of search based motion planning algorithms used in
 robotics, the notion of {\em resolution completeness} has been proposed 
 in~\cite{Cheng.LaValle:04b,Goldberg:95}. A resolution-complete algorithm, is guaranteed to find a solution (if one exists),
 in finite time, provided that, the resolution of discretization in
 input space, and state space, is high enough. 
In~\cite{Bhatia.Frazzoli:06,Bhatia.Frazzoli:07}, we introduced a similar notion for disproving 
safety of continuous and hybrid systems over infinite time horizon, and, proposed resolution-complete
 deterministic algorithms applicable to linear time invariant systems (abbreviated as LTI systems). The algorithms
 work by incrementally building trajectories in
 state space at increasing levels of resolution, such that, either they fetch a legitimate counter example, or a guarantee that no such counter example exists at
 given level of resolution (and hence a conclusive
 answer to the unsafety problem), in finite time. \iflong In Fig.~\ref{fig:pc vs rc intro}, at the right, we show an example where we end up with a non-zero volume under-approximation
to reachable set (when no counter example was found),
when using a sampling-based resolution-complete algorithm for safety falsification of a second order system with exogenous inputs~\cite{Bhatia.Frazzoli:06}.\fi Very recently, 
an alternate notion of resolution completeness has also been proposed by Cheng and Kumar for continuous-time systems for the case of finite horizon in~\cite{Cheng.Kumar:06}.
\subsection{Contributions of the paper and relation to other approaches}
 In this paper, we propose two new resolution-complete algorithms that use incremental grid-based sampling methods (similar to those proposed in~\cite{Lin.Yer.Lav:04}) with good coverage properties for exploring the state space. The first algorithm uses breadth-first-search based scheme with branch and bound strategy to explore the state space for counter examples to given safety specification, at increasing levels of resolution of the input. This algorithm can be applied to discrete-time LTI hybrid systems. Simulation results indicate that this algorithm, is an improvement over the one proposed by us in~\cite{Bhatia.Frazzoli:07} which is based on depth-first-search based scheme with branch and bound strategy. The second algorithm proposed in this paper can be used to falsify safety of discrete-time LTI continuous systems more efficiently, when the initial set is the equilibrium point. The reachable set for such initial conditions is a {\em convex} set. The proposed algorithm uses this fact to explore the state space more efficiently. \iflong Since both the algorithms are resolution-complete, they consider the safety falsification problem over infinite time horizon, with guarantees of finite-time termination of the search procedure and a conclusive answer at termination.
 
  An important feature that distinguishes our approach from alternate approaches recently proposed by others (e.g.~\cite{Girard:07}) for addressing safety over infinite time horizon, is that the requirements on discretization of state space in our case do {\em not} depend on time length of trajectories. This is an important advantage by itself, and also helps the algorithms in avoiding the so called {\em wrapping effect}~\cite{Kuhn:98}. This means that, in case no counter example is found, then the quality of the approximation constructed as a proof for safety of the system is not affected by time horizon. We also do {\em not} discretize the space of inputs to obtain completeness guarantees (unlike the approaches presented in~\cite{Cheng.Kumar:06,Girard:07}).
\else 
  Our notion of resolution completeness is defined on the space of exogenous control signals (unlike~\cite{Cheng.Kumar:06}); in our work, the requirements on discretization of state space, do {\em not} depend on time length of trajectories (unlike~\cite{Girard:07}); also, we do {\em not} discretize the space of inputs to obtain completeness guarantees (unlike~\cite{Cheng.Kumar:06,Girard:07}).
\fi

The paper is organized as follows. We introduce the framework for
describing hybrid systems and reachable sets in Section~\ref{sec:preliminaries} together with a formal definition of the notion of resolution completeness for safety falsification. In Section~\ref{sec:basic idea}, we explain the main idea used in the proposed algorithms for resolution completeness. 
 Conditions for resolution completeness of the proposed algorithms are discussed in Section~\ref{sec:derivations dth}. In Section~\ref{sec:sampling methods}, we explain the sampling method used by us in the algorithms and in Section~\ref{sec:rc algorithms}, we present the algorithms, together with the proof of their completeness. Simulation results are discussed in Section~\ref{sec:experiments}, and the paper is concluded in Section~\ref{sec:conclusions}.
\section{Preliminaries}\label{sec:preliminaries}
 In this section, we introduce notation for describing hybrid systems and reachable sets and define the notion of resolution completeness.
\begin{definition}[Hybrid System] We define a discrete-time LTI hybrid system
  $H$ as a tuple, $H= ( \mathcal{Q}, \mathcal{X}, \mathcal{U}, U, \Phi,
  \Delta, \mathcal{I}, \mathcal{S}, \mathcal{T}),$ where:
\begin{itemize}
\item $\mathcal{Q}$ is the discrete state space.
\item $\mathcal{X} \subseteq \reals^n$ is the continuous state space.
\item $\mathcal{U}$ is the family of allowed control functions
  equipped with a well defined metric. Each control function $u \in
  \mathcal{U}$ is a function $u:[0,t_f] \rightarrow U$, where $t_f \in \naturals$ is the terminal time of the trajectory.
The convex set $U \subset\reals^m$ is the input space. 
For simplicity, we assume $U$ to be a unit hypercube.
\item $\Phi: \mathcal{Q} \times \mathcal{X} \times \mathcal{U}
  \rightarrow \mathcal{X}$ is a function describing the evolution of the
  system on continuous space, governed by a difference equation of
  the form $x(i+1) = \Phi(x,q,u)=A_qx(i)+B_qu(i), i \in \naturals$, and $A_q,B_q$ are real matrices of size $n\times n, n\times m$ respectively.
  \item $\Delta \subset (\mathcal{Q} \times \mathcal {X})\times (\mathcal{Q} \times \mathcal {X})$, a relation
  describing discrete transitions in the hybrid states. Discrete
  transitions can occur on location-specific subsets $\mathcal{G}(q,q')
  \subseteq \mathcal{X}$, called guards, and result in jump relations
  of the form $(q,x) \mapsto (q', x)$.
\item $\mathcal{I}, \mathcal{S}, \mathcal{T} \subseteq \mathcal{Q}
  \times \mathcal{X}$ are, respectively, the invariant set, the
  initial set, and the unsafe set. 
\end{itemize}
\end{definition}
 The {\em semantics} of our model are defined as follows. When the
discrete state is in location $q$, the continuous state evolves
according to the difference equation $x(i+1) =A_q x(i)+B_q u(i), i\in \naturals$, for some value of
the input $u(i) \in U$, with $(q(0),x(0)) \in \mathcal{S}$.
In addition, whenever $x(i) \in \mathcal{G}(q(i),q')$, for some $q'$, the system
has the option to perform one of the discrete transitions modeled by
the relation $\Delta$, and be instantaneously reset to the new
discrete state $q'$, while the continuous state remains the same as before the discrete transition. The system
is required to respect the invariants by staying within $\mathcal{I}$
at all times. For the cases when the discrete state space is just a
single location, we
drop the discrete state $q$ from the notation.
 $\Omega = \{\bar{q}, \bar{q}: [0,t_f] \rightarrow Q\}$ denotes the set of trajectories on the discrete space. Trajectories of the system starting from $z \in\mathcal{Q} \times
 \mathcal{X} $ and using $u \in \mathcal{U}$, under the discrete
 evolution $\bar{q}$, are denoted by
 $\Psi(z,u,\bar{q}) \subset \mathcal{Q}\times \mathcal{X}$. A point on the trajectory $\Psi(z,u,\bar{q})$, reached at time $i
 \leq t_f$, is denoted by $\psi(z,u,\bar{q},i) \in\mathcal{Q}\times \mathcal{X}$. We will denote by $A^{\circ}$ the interior of a set $A$.
 \begin{definition}[Reachable Set] The reachable set $\mathcal{R}(\mathcal{U})$ for 
a system $H$ denotes the set of states that can be reached in the future. It is defined as
\begin{equation*}
\D \mathcal{R}(\mathcal{U})=\bigcup_{z \in
  \mathcal{S}, \;u \in \mathcal{U},\; \bar{q} \in \Omega} \Psi(z,u,\bar{q}).
\end{equation*}
\end{definition}
 We now formalize the notion of resolution completeness of an algorithm for safety falsification of a discrete-time LTI system with exogenous inputs. \footnote{For a similar notion applicable to more general class of systems, we refer the reader to~\cite{Bhatia.Frazzoli:07}.} 
\begin{definition}[Resolution completeness] A given algorithm is
  resolution-complete for safety falsification of a
system $H$, if there exists a sequence of
family of  control functions, $\{\mathcal{U}_{j} \}_{j=1}^{\infty}$, satisfying
 $\mathcal{U}_j\subset\mathcal{U}_{j+1}, \forall j$,
 and $\lim_{j \to \infty} \mathcal{U}_j = \mathcal{U}$ (in the sense of
 a given metric), such that, for any given $j \geq 1$, the algorithm terminates in finite time, producing, either a counter example $\psi(z_0,u,\bar{q},t)$, using a control
function $u \in \mathcal{U}, z_0 \in \mathcal{S}, \bar{q} \in \Omega, t\geq 0$, satisfying,
$\psi(z_0,u,\bar{q},t) \in \mathcal{T} $, or a guarantee that, $\mathcal{R}^{\circ}(\mathcal{U}_j) \cap\mathcal{T} = \emptyset$.
\end{definition}
\section{Basic Idea}
\label{sec:basic idea}
 One way to achieve completeness is to construct an approximation $\mathcal{R}_j$ (while searching for a counter example) that satisfies the set inclusion $\mathcal{R}^{\circ}(\mathcal{U}_j)\subseteq \mathcal{R}_j\subseteq\mathcal{R}(\mathcal{U})$
(shown in Fig.~\ref{fig:set inclusions}), thus guaranteeing feasibility of counter examples and safety 
  with respect to control functions belonging to $\mathcal{U}_j$. The algorithms that we propose in this paper use this idea.  
 To construct $\mathcal{R}_j$, the algorithms discretize the state-space using {\em multi-resolution} grids. For a discrete location $q \in \mathcal{Q}$, $G(q)$ denotes the multi-resolution grid for location $q$ that over-approximates $\mathcal{R}\cap\mathcal{I}(q,\cdot)$.
   The algorithms keep a record of the portion of $G(q)$ that is found to be reachable (denoted as $G_f(q)$), either {\em a priori} or during an execution of the algorithm. $G_u(q)$ denotes the rest of $G(q)$, i.e. $G_u(q)=G(q)\setminus G_f(q)$. The algorithms progressively sample regions of $G_u(q)$ (called {\em cells}) at increasing levels of resolution and try to construct trajectories that end in the sampled cell starting from somewhere in $G_f(q)$. The conditions for state-space discretization derived in Section~\ref{sec:derivations dth} guarantee that finding one feasible point  $\psi(z_0,u,\bar{q},t)$ in a cell $\xi(\varepsilon_j(q))$ of size $\varepsilon_j(q)$, with
$z_0 \in G_f(q), u \in \mathcal{U}_j, \bar{q} \in \Omega$, is enough to claim that $\xi \subset \mathcal{R}(\mathcal{U})$.
\iflong
\begin{figure}
\centering
\subfigure[\scriptsize Initialization step]{
\includegraphics[scale=0.25,angle=0]{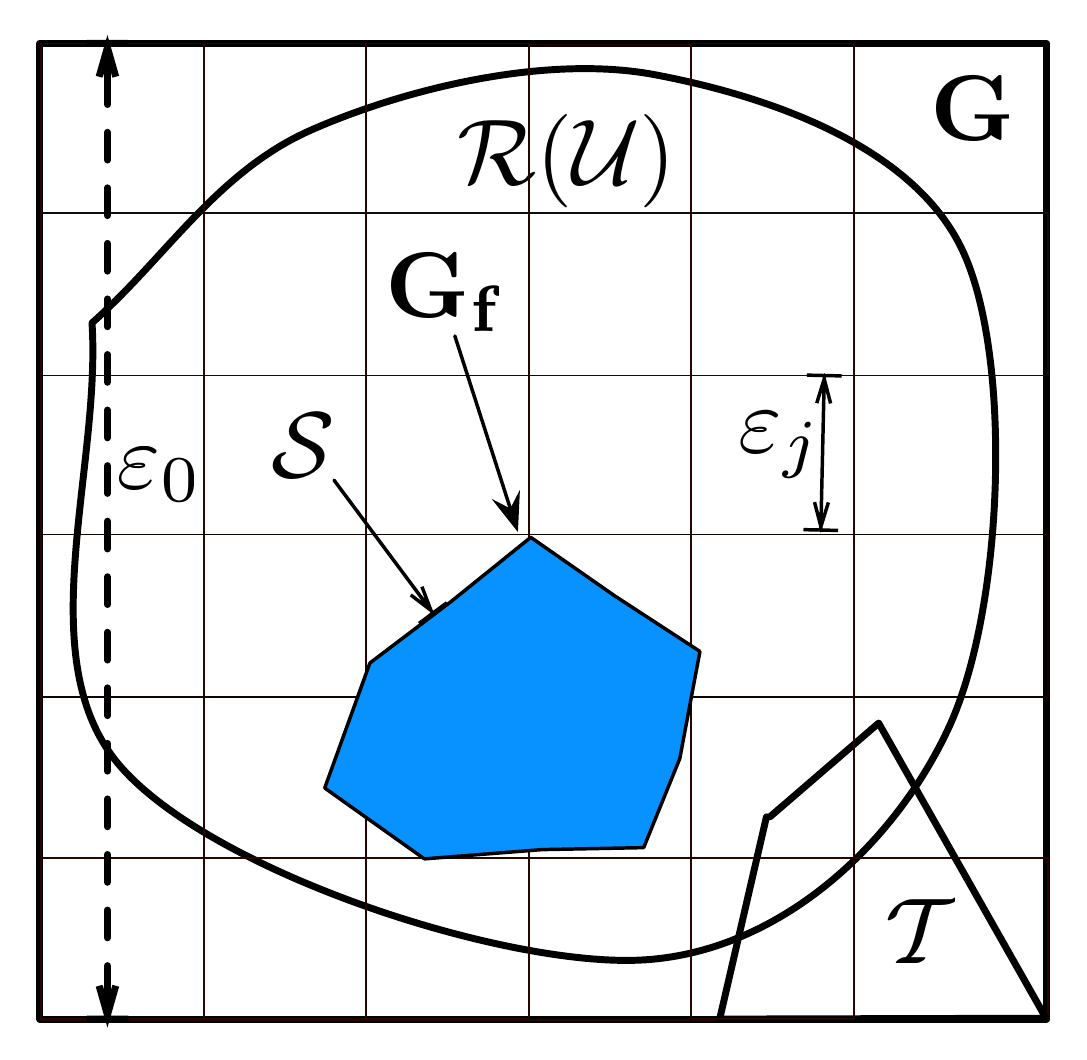}}\hfill
\subfigure[\scriptsize Exploration step]{
\includegraphics[scale=0.25,angle=0]{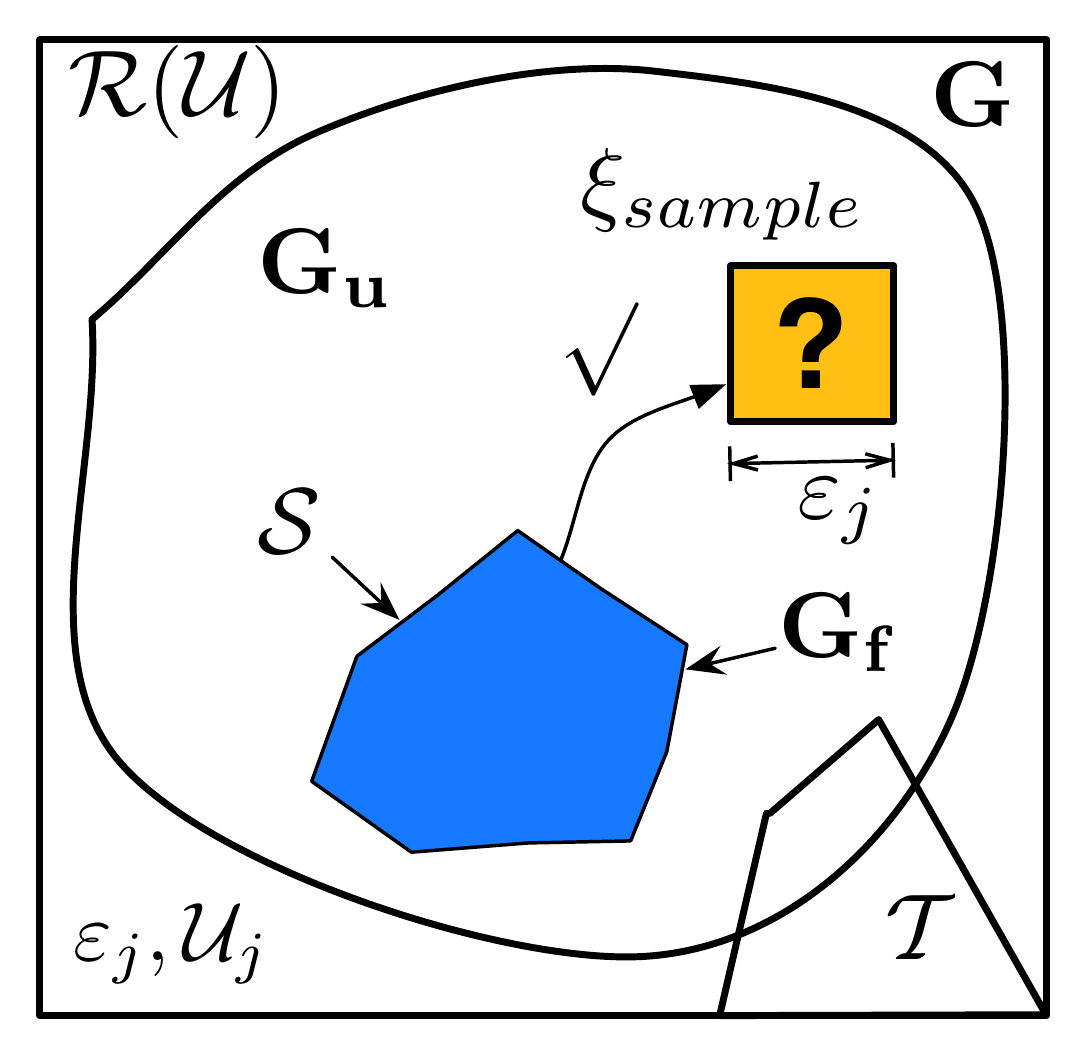}}\hfill
\subfigure[\scriptsize Refinement step]{
\includegraphics[scale=0.25,angle=0]{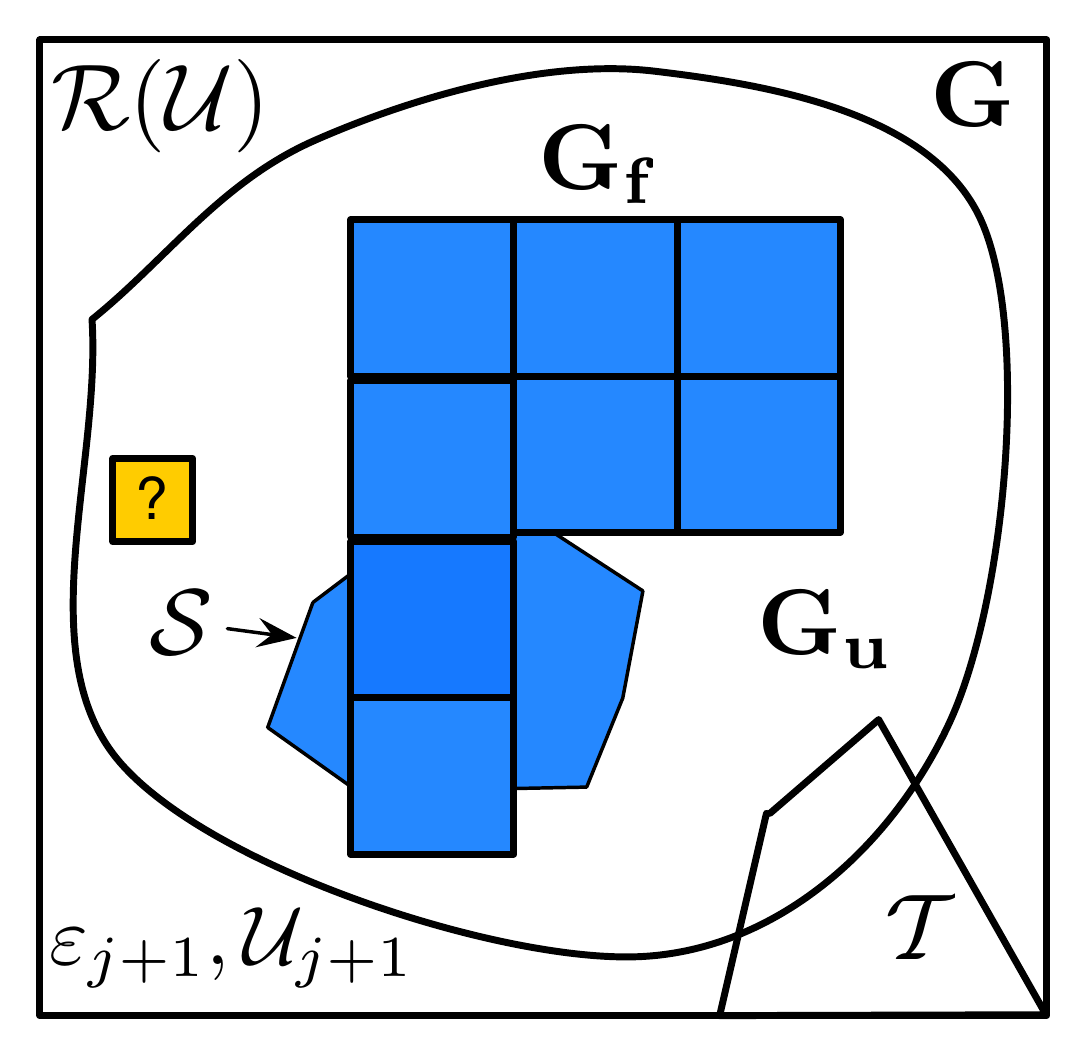}}\hfill
\subfigure[\scriptsize Termination step]{
\includegraphics[scale=0.25,angle=0]{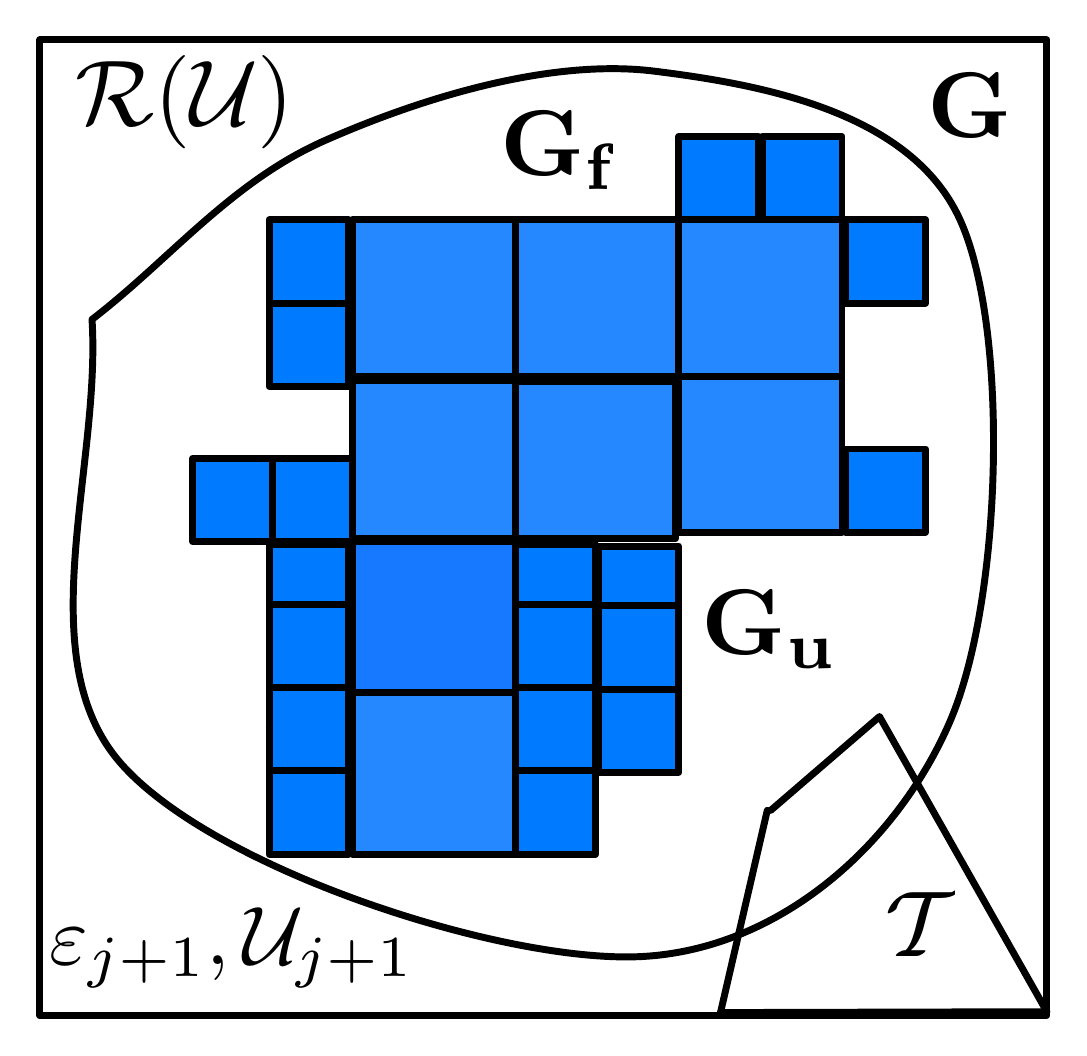}}\hfill
\caption{Execution of a resolution-complete algorithm for $\operatorname{card}{(\mathcal{Q})}=1$}
\label{fig:alg basic idea}
\end{figure}
In Fig.~\ref{fig:alg basic idea}, we show an execution of such a resolution-complete safety falsification algorithm (starting at resolution $j$ and stopping at $j+1$) for the case when $\operatorname{card}{\mathcal{Q}}=1$.
 
 We would like to remark here that, if we can find conditions that ensure the set inclusion, $\mathcal{R}^{\circ}(\mathcal{U}_j)\subseteq \mathcal{R}_j\subseteq\mathcal{R}(\mathcal{U})$ for nonlinear systems, then similar algorithms can be used for resolution complete safety falsification of non linear systems as well.
\else
\begin{figure}
\centering
\subfigure[\scriptsize Initialization step]{
\includegraphics[scale=0.25,angle=0]{figures/grid-expand0.pdf}}\hfill
\subfigure[\scriptsize Exploration step]{
\includegraphics[scale=0.25,angle=0]{figures/grid-expand3.pdf}}\hfill
\subfigure[\scriptsize Refinement step]{
\includegraphics[scale=0.25,angle=0]{figures/grid-expand5.pdf}}\hfill
\subfigure[\scriptsize Termination step]{
\includegraphics[scale=0.25,angle=0]{figures/grid-expand6.pdf}}\hfill
\caption{Execution of a resolution-complete algorithm for $\operatorname{card}{(\mathcal{Q})}=1$}
\label{fig:alg basic idea}
\end{figure}
In Fig.~\ref{fig:alg basic idea}, we show an execution of such a resolution-complete safety falsification algorithm for the case when $\operatorname{card}{(\mathcal{Q})}=1$.
\fi
\section{Conditions for Resolution-Complete Safety Falsification}
\label{sec:derivations dth}
  In this section, we derive necessary conditions for resolution-complete safety falsification of discrete-time LTI hybrid systems. For all the discussion that follows, $\lVert \cdot
\rVert$ denotes the infinity norm. The set of control functions is, $\mathcal{U}=\{u(\cdot):\lVert u(i) \rVert \leq 1,\;
 \forall i\in \naturals, i\leq t_f \} $.
$\mathcal{C}_q = [B_q\; A_qB_q \ldots A_q^{n-1}B_q]$
denotes the controllability matrix of the system in location $q \in
\mathcal{Q}$.
\subsection{Control functions for resolution completeness}
\label{subsec:control functions cont}
For resolution completeness\footnote{Resolution is defined on the space of control functions, $\mathcal{U}$.}, we need a sequence of control
functions $\{\mathcal{U}_j\}_{j=1}^{\infty}$ such that 
$\mathcal{U}_j\subset\mathcal{U}_{j+1}, \forall j \in \naturals$
 and $\lim_{j \to \infty} \mathcal{U}_j = \mathcal{U}$ in some metric
 defined on the space of control functions. We consider sequence of
 families of piece-wise constant control functions for our algorithm.
\begin{proposition}{\label{prop:control approx cont}}For given family of
 control functions $\mathcal{U}$, the sequence
  of family of control functions $\{\mathcal{U}_j \}_{j=1}^{\infty}$, 
$\mathcal{U}_j=\{u(\cdot):\lVert u(i) \rVert \leq l_j,\;
 \forall i \in \naturals, i \leq t_f, t_f \in \naturals \} $, with
 $\{l_j\}$ a strictly non decreasing sequence of real numbers and $\lim_{j
   \to \infty} l_j = 1$ satisfies 
$\mathcal{U}_j\subset\mathcal{U}_{j+1}, \forall j$ and 
$\lim_{j \to \infty} \mathcal{U}_j =\mathcal{U} $ in $L_{\infty}$ norm.
\end{proposition}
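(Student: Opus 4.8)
The plan is to verify the two assertions separately, disposing of the nested-inclusion property first (it is essentially immediate) and then establishing the $L_\infty$ convergence by an explicit pointwise clipping construction.

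For the inclusion, I would observe that any $u \in \mathcal{U}_j$ satisfies $\lVert u(i) \rVert \leq l_j$ for every $i \leq t_f$; since the sequence $\{l_j\}$ is non-decreasing we have $l_j \leq l_{j+1}$, whence $\lVert u(i) \rVert \leq l_{j+1}$ and $u \in \mathcal{U}_{j+1}$, giving $\mathcal{U}_j \subseteq \mathcal{U}_{j+1}$. To obtain the strict inclusion claimed in the statement (which needs the increase to be strict, $l_j < l_{j+1}$), I would exhibit a witness: the constant control $u^{\star}(i) \equiv (l_{j+1}, 0, \dots, 0)$ has $\lVert u^{\star}(i) \rVert = l_{j+1}$. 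Because $\{l_j\}$ increases to $1$ we have $l_{j+1} \leq 1$, so $u^{\star}$ takes values in the unit hypercube $U$ and hence $u^{\star} \in \mathcal{U}_{j+1} \subseteq \mathcal{U}$; yet $l_{j+1} > l_j$ forces $u^{\star} \notin \mathcal{U}_j$, establishing strictness.

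For the convergence $\lim_{j} \mathcal{U}_j = \mathcal{U}$, understood as Hausdorff convergence in the $L_\infty$ metric $d(u,v) = \sup_i \lVert u(i) - v(i) \rVert$, I would first note that $l_j \leq 1$ gives $\mathcal{U}_j \subseteq \mathcal{U}$, so one side of the Hausdorff distance vanishes and only $\sup_{v \in \mathcal{U}} \inf_{u \in \mathcal{U}_j} d(u,v)$ needs bounding. Given $v \in \mathcal{U}$, I would define $u \in \mathcal{U}_j$ by componentwise clipping, $u_k(i) = \max(-l_j, \min(l_j, v_k(i)))$. Then $\lVert u(i) \rVert \leq l_j$, so $u \in \mathcal{U}_j$; and since $\lvert v_k(i) \rvert \leq 1$, the clipping displaces each coordinate by at most $1 - l_j$, yielding $\lVert u(i) - v(i) \rVert \leq 1 - l_j$ at every index $i$ and therefore $d(u,v) \leq 1 - l_j$. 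As this bound is independent of $v$, the Hausdorff distance between $\mathcal{U}_j$ and $\mathcal{U}$ is at most $1 - l_j$, which tends to $0$ as $j \to \infty$ because $l_j \to 1$. Equivalently, this shows $\overline{\bigcup_j \mathcal{U}_j} = \mathcal{U}$.

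The main obstacle I anticipate lies not in any single estimate but in making the ambient metric space precise: the control functions $u:[0,t_f] \to U$ carry domains of varying terminal time $t_f$, so one must fix a convention (for instance, zero-padding beyond $t_f$, or comparing only functions of a common length) before the expressions $d(u,v)$ and $\lim_j \mathcal{U}_j = \mathcal{U}$ are even well defined. The saving feature is that the clipping estimate $\lVert u(i) - v(i) \rVert \leq 1 - l_j$ holds at each index \emph{uniformly} in $t_f$, so the convergence bound is insensitive to this convention; I would therefore state the chosen convention explicitly at the outset and then let the uniform pointwise bound carry the argument.
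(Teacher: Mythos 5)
Your proof is correct, and it is considerably more complete than the paper's own, which consists of the single sentence ``The proposition is proved by stated requirements on $\{l_j\}$'' and supplies no actual argument. Everything you add is sound: the monotonicity of $\{l_j\}$ gives the inclusion $\mathcal{U}_j \subseteq \mathcal{U}_{j+1}$, your constant-control witness correctly shows the inclusion is strict exactly when $l_j < l_{j+1}$ (which is evidently what the authors intend by the awkward phrase ``strictly non decreasing''), and the componentwise clipping bound $d(u,v) \leq 1 - l_j$ cleanly establishes Hausdorff convergence in $L_\infty$. Your observation that the varying terminal times $t_f$ must be handled by a fixed convention before the metric is even well defined is a genuine gap in the paper's setup that the authors gloss over, and your remark that the clipping estimate is uniform in $t_f$ is the right way to dispose of it. In short: same claim, but you have actually proved it where the paper merely asserts it.
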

\begin{proof}
 The proposition is proved by stated requirements on $\{l_j\}$.  
\end{proof}
\subsection{Assumptions}
\label{subsec:system assumptions cont}
\begin{assumption}
\label{assmptn: ctb stb cont} For each discrete 
location $q \in \mathcal{Q}$, the system $H$ is stable at the origin, $\operatorname{rank}{(B_q)}=m$, and, $\operatorname{rank}{(\mathcal{C}_q)}=n$.
\end{assumption}
 Stability  (along with {\em Assumption}~\ref{assmptn:lin eq cont}) guarantees that $G(q)$ has a finite volume. $\operatorname{rank}{(B_q)}=m$ and $\operatorname{rank}{(\mathcal{C}_q)}=n$ is needed to be able to use Propositions~\ref{prop:nonemptyint cont},~\ref{prop:cell size
  cont}.
\begin{assumption}
\label{assmptn:lin eq cont}
  For each discrete location $q \in \mathcal{Q}$, $\mathcal{S}(q,\cdot),\mathcal{G}(q,\cdot)$ are specified as convex polytopes and $\mathcal{I}(q,\cdot), \mathcal{T}(q,\cdot)$ are specified as a convex polyhedra.
\end{assumption}
 At each step, the algorithms incrementally build the trajectories and check for unsafety and discrete mode switches by
  solving linear programs. Hence we need the sets
  $\mathcal{S}(q\cdot),\mathcal{I}(q,\cdot), \mathcal{G}(q,\cdot), \mathcal{T}(q,\cdot)$ to be convex polyhedra. Boundedness of $\mathcal{S}(q,\cdot),\mathcal{G}(q,\cdot)$ is used to prove finite time termination of the algorithms.

We now consider continuous dynamics in a given discrete location
 $q\in \mathcal{Q}$, and derive sufficient conditions for
 discretization of cells in $G(q)$.
\iflong
\begin{figure}
\centering
\subfigure[\scriptsize Basic idea]{
\label{fig:set inclusions}
\includegraphics[scale=0.25,angle=0]{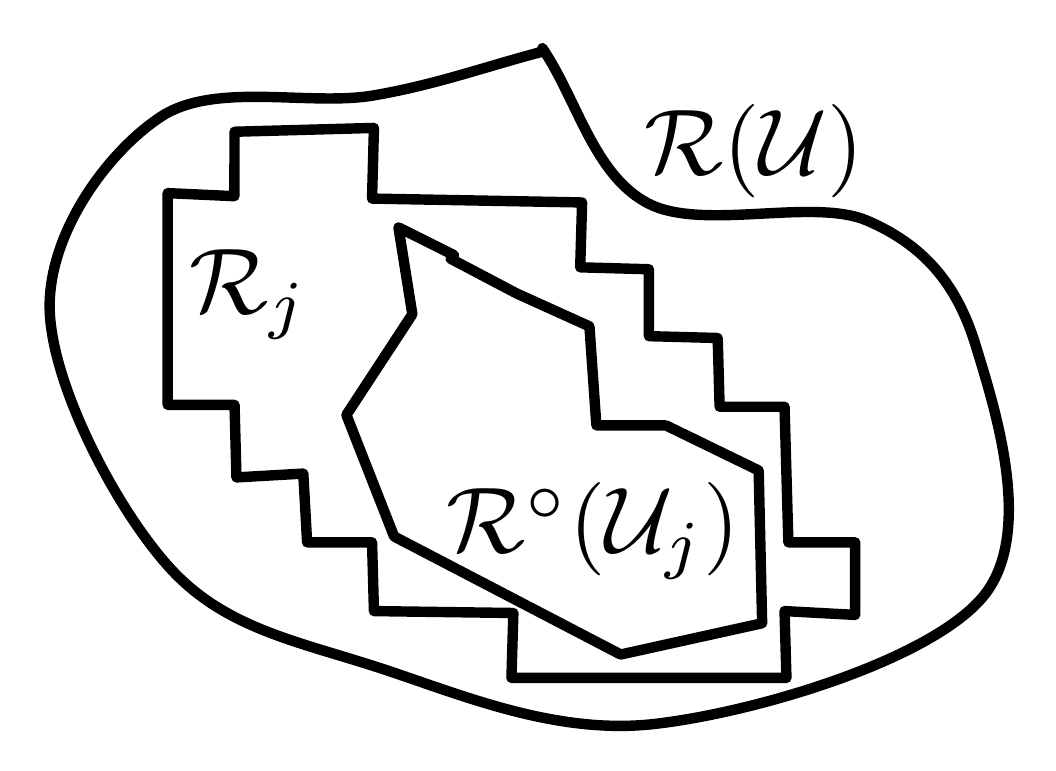}}\hfill
\subfigure[\scriptsize One-step reachability]{
\label{fig:onestep_completeness}
\includegraphics[scale=0.25,angle=0]{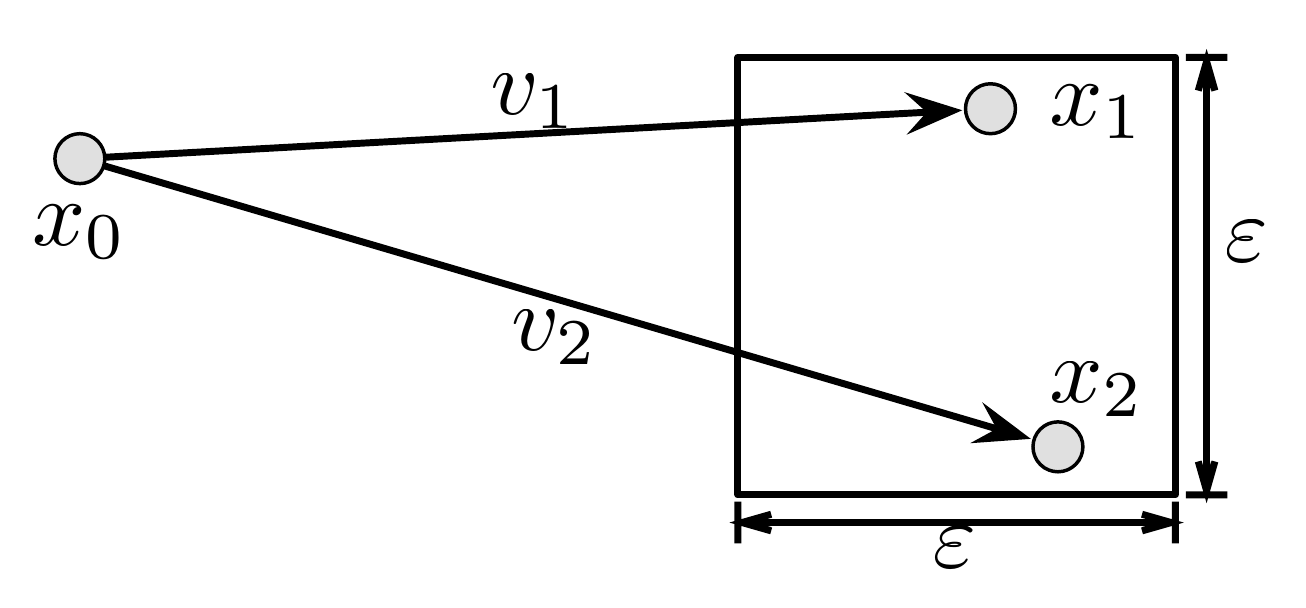}}\hfill
\subfigure[\scriptsize k-step reachability]{
\label{fig:kstep_completeness}
\includegraphics[scale=0.25,angle=0]{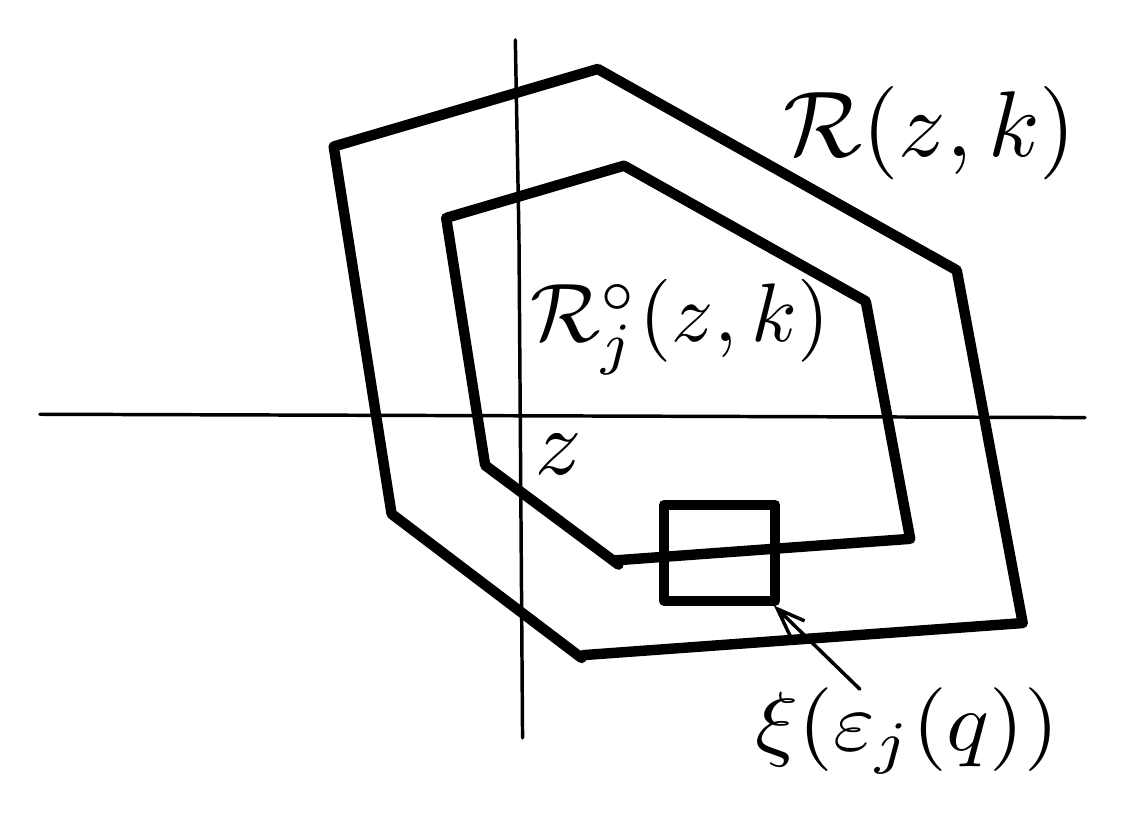}}\hfill
\caption{Set inclusions for resolution completeness}
\label{fig:overunder-onestep_inscribe}
\end{figure}
\else \vspace{-0.2 in}
\begin{figure}
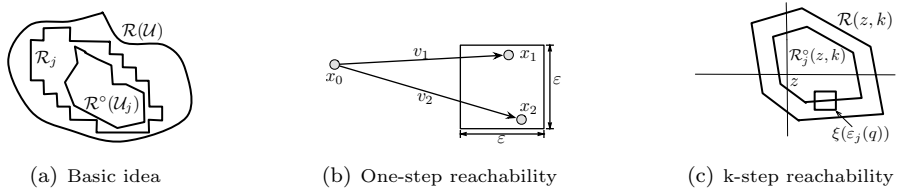

\centering
\subfigure[\scriptsize Basic idea]{
\label{fig:set inclusions}
\includegraphics[scale=0.25,angle=0]{figures/overunder-rotate}}\hfill
\subfigure[\scriptsize One-step reachability]{
\label{fig:onestep_completeness}
\includegraphics[scale=0.25,angle=0]{figures/2pt_completeness}}\hfill
\subfigure[\scriptsize k-step reachability]{
\label{fig:kstep_completeness}
\includegraphics[scale=0.25,angle=0]{figures/onestep_inscribe}}\hfill
\caption{Set inclusions for resolution completeness}
\label{fig:overunder-onestep_inscribe}
\end{figure}
\vspace{-0.4in} 
\fi
\subsection{Sufficient conditions for state space discretization}
\label{subsec:ss discretization cont}
 We first state an important proposition that guarantees that the set of
 points $\psi(z,u,\bar{q},k)$, that
 can be searched by the algorithms for feasibility, starting from $z$,
 with $u \in \mathcal{U}_j$ (as in Proposition~\ref{prop:control approx cont}) in
 $k\in \naturals$ steps has a non zero volume. 
 \iflong This follows from the
 assumption that $\operatorname{rank}{(B_q)}=m$, 
 and, $\operatorname{rank}{(\mathcal{C}_q)}=n$ in each
 discrete location $q \in \mathcal{Q}$.\fi
\begin{proposition}{\label{prop:nonemptyint cont}} For a given system
  $H$ satisfying {\em Assumption}~\ref{assmptn: ctb stb cont}, and for a
  given location $q \in \mathcal{Q}$, the set of points
  reachable by using a control function $u \in
  \mathcal{U}_j$ (as in Proposition~\ref{prop:control approx cont}) over $k$ steps has a
 non empty interior if $\lceil n/m \rceil \leq k \leq n$ for any $j
 \geq 1$, where $n$ is the dimension of the continuous state space and
 $m$ is the dimension of input space.
\end{proposition}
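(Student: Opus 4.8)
The plan is to reduce the statement about nonempty interior to a rank condition on a partial controllability matrix, and then read off the admissible range of $k$ from that condition. First I would unroll the dynamics from the fixed initial state. Writing $x$ for the continuous component of $z$, the recursion $x(i+1)=A_qx(i)+B_qu(i)$ gives
\begin{equation*}
\psi(z,u,\bar q,k) = A_q^k x + \sum_{i=0}^{k-1} A_q^{\,k-1-i} B_q\, u(i).
\end{equation*}
As $u$ ranges over $\mathcal{U}_j$, each $u(i)$ ranges independently over the box $\{v\in\reals^m:\lVert v\rVert\leq l_j\}$, so the set of points reachable over $k$ steps is the affine image $A_q^k x + M_k W$, where $M_k=[\,A_q^{k-1}B_q\;\cdots\;A_qB_q\;B_q\,]$ is $n\times(mk)$ and $W=\{w\in\reals^{mk}:\lVert w\rVert\leq l_j\}$. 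Since $l_j>0$ for every $j\geq 1$, the box $W$ has nonempty interior in $\reals^{mk}$, which is what will make the conclusion independent of $j$.

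Next I would invoke the elementary fact that a linear map $L:\reals^{mk}\to\reals^n$ carries a set with nonempty interior to a set with nonempty interior exactly when $L$ is surjective: a surjective linear map is open, since it restricts to an isomorphism on a complement of its kernel, whereas a non-surjective one has image inside a proper subspace, which is nowhere dense. Translation by $A_q^k x$ does not affect interiors. Hence the reachable set over $k$ steps has nonempty interior if and only if $\operatorname{rank}(M_k)=n$. Because column reordering preserves the column space, $\operatorname{rank}(M_k)=\operatorname{rank}[\,B_q\;A_qB_q\;\cdots\;A_q^{k-1}B_q\,]$, i.e. the rank of the first $k$ blocks of the controllability matrix $\mathcal{C}_q$.

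It then remains to show that this partial controllability matrix has rank $n$ throughout $\lceil n/m\rceil\leq k\leq n$. The upper endpoint is immediate: at $k=n$ the matrix is $\mathcal{C}_q$, which has rank $n$ by Assumption~\ref{assmptn: ctb stb cont}, and by the Cayley--Hamilton theorem the higher powers $A_q^k$ ($k\geq n$) add no new directions, so $k$ need not exceed $n$. For the lower endpoint I would note that each block $A_q^iB_q$ contributes at most $m$ columns, so $mk<n$ forces rank below $n$ and empty interior; this is what makes $\lceil n/m\rceil$ the right cutoff. To cover the interior of the interval I would track the partial ranks $\rho_k=\operatorname{rank}[\,B_q\;\cdots\;A_q^{k-1}B_q\,]$: the hypothesis $\operatorname{rank}(B_q)=m$ gives $\rho_1=m$, the successive increments $\rho_k-\rho_{k-1}$ are non-increasing, and they cannot vanish while $\rho_k<n$, for otherwise the rank would freeze short of $n$ and contradict controllability at $k=n$.

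The step I expect to be the genuine obstacle is this last one: establishing that full rank is actually attained already at $k=\lceil n/m\rceil$ rather than only at $k=n$. The monotonicity and increment bounds cleanly yield the necessity of the lower cutoff and the sufficiency of $k=n$, but the precise step at which the rank first reaches $n$ is governed by the controllability index, which depends on the fine structure of $(A_q,B_q)$ and need not equal $\lceil n/m\rceil$. The careful bookkeeping tying $\operatorname{rank}(B_q)=m$ together with controllability to control the whole interval is where the real work of the proof lies.
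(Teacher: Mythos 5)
Your reduction is exactly the one the paper uses: unroll the dynamics over $k$ steps, observe that the $k$-step reachable set is the affine image $A_q^k x + M_k W$ of a full-dimensional box under the truncated controllability matrix $M_k=[B_q\;A_qB_q\;\cdots\;A_q^{k-1}B_q]$, and conclude that it has nonempty interior if and only if $\operatorname{rank}(M_k)=n$. The paper's proof is a two-line version of the same argument: it simply asserts ``Since $\operatorname{rank}(\mathcal{C}_q)=n$, $\operatorname{rank}[B_q\;A_qB_q\ldots A_q^{k-1}B_q]=n$'' for every $k$ in the stated range, with no further justification. The obstacle you flag at the end is therefore not a defect of your write-up relative to the paper; it is a genuine gap in the proposition itself. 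The smallest $k$ for which $\operatorname{rank}(M_k)=n$ is the controllability index $\mu$ of $(A_q,B_q)$, and the hypotheses $\operatorname{rank}(B_q)=m$, $\operatorname{rank}(\mathcal{C}_q)=n$ only pin it down to $\lceil n/m\rceil\leq\mu\leq n-m+1$. A concrete counterexample to the claimed lower endpoint: take $n=4$, $m=2$, $A_q$ nilpotent with $A_qe_1=e_2$, $A_qe_2=e_3$, $A_qe_3=A_qe_4=0$, and $B_q=[e_1\;e_4]$. All eigenvalues are zero so Assumption~\ref{assmptn: ctb stb cont} holds, $\operatorname{rank}(B_q)=2$, and $\mathcal{C}_q$ has rank $4$, yet $\operatorname{rank}[B_q\;A_qB_q]=3<4$, so at $k=\lceil n/m\rceil=2$ the reachable set lies in a hyperplane and has empty interior.

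Your partial-rank bookkeeping ($\rho_1=m$, increments non-increasing, no increment can vanish while $\rho_k<n$ without freezing the rank below $n$) is the right tool and it yields the correct salvage: $\rho_k\geq\min(n,\,m+k-1)$, hence $\operatorname{rank}(M_k)=n$ is guaranteed precisely for $n-m+1\leq k\leq n$ (equivalently for all $k\geq\mu$), and in particular for $k=n$. That corrected range is all the rest of the paper actually needs: Proposition~\ref{prop:cell size cont} and the incremental construction of Section~\ref{subsec:explorer dynamics cont} only require one admissible $k$ for which $\Gamma_q$ has full row rank, and $k=n$ always works. So the right conclusion is that your argument is complete once the interval is restated as $n-m+1\leq k\leq n$, and that the endpoint $\lceil n/m\rceil$ appearing in the proposition (and silently assumed in the paper's proof) cannot be derived from Assumption~\ref{assmptn: ctb stb cont} alone --- you have located an error in the paper rather than a missing step in your own proof.
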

\begin{proof}
 Let $k$ be chosen such that $\lceil n/m \rceil \leq k \leq n$. The
 continuous dynamics of the system are  $x(i+1) = A_q x(i) + B_q u$, 
where $u \in \mathcal{U}_j$. Applying this over k steps, we get $x(k+i)= A_q^k x(i) + [B_q \; A_qB_q \ldots A_q^{k-1}B_q]\tilde u$, where $\tilde u \in \reals^{km}$ is the augmented
 input over $k$ steps.
Let $v =[B_q\; A_qB_q \ldots A_q^{k-1}B_q]\tilde
u$. Then the dynamics can be written as $x(k+i)= A_q^kx(i)
+v, v \in \reals^n$. Since $\operatorname{rank}{(\mathcal{C}_q)}=n$,
$\operatorname{rank}{[B_q\; A_qB_q\ldots A_q^{k-1}B_q]}=n$. Hence the set of
points reachable in $k$ steps is guaranteed to have a non empty interior.  
\end{proof}
 We now derive a conservative upper bound on the discretization of $G(q)$, so that
 for a cell $\xi$, if $\xi \cap \mathcal{R}^{\circ}(\mathcal{U}_j) \cap \mathcal{I}(q,\cdot)\neq \emptyset$ then $\xi
   \subset \mathcal{R}(\mathcal{U})\cap \mathcal{I}(q,\cdot)$. To do so, we first prove the result for a simpler case in Lemma~\ref{lemma:discretization}, and then prove the main result in Propositon~\ref{prop:cell size cont}.
\begin{lemma}
\label{lemma:discretization}
 Consider a continuous system $H$, with 
$x(i+1)=Ax(i)+v$ with, $x$, $v \in \reals^n$. For a given $x_1$, and an $\alpha_1>0$, with 
$x_1=Ax_0+v_1, x_0, v_1\in \reals^n, and, \lVert v_1\rVert \leq \alpha_1$, the following holds true:
For any $x_2 \in \reals^n$, and a given $\alpha_2>\alpha_1$, if $\lVert x_1-x_2\rVert \leq  \varepsilon$, and $\varepsilon \leq \alpha_2-\alpha_1$, then $\exists v_2 \in \reals^n$, such that, $x_2 = Ax_0+v_2$, with, $\lVert v_2 \rVert \leq \alpha_2$.
\end{lemma}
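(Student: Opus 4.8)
The plan is to exhibit the required $v_2$ explicitly and then verify its norm bound via the triangle inequality; no existence argument beyond a direct construction is needed. The only sensible choice for $v_2$ is dictated by the equation it must satisfy: since we require $x_2 = Ax_0 + v_2$ with the \emph{same} $x_0$ as in the hypothesis on $x_1$, I would simply set $v_2 := x_2 - Ax_0$. This makes the equation $x_2 = Ax_0 + v_2$ hold by definition, so the entire content of the lemma reduces to checking the bound $\lVert v_2 \rVert \leq \alpha_2$.

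To establish that bound, I would insert $x_1$ and use the hypothesis $x_1 = Ax_0 + v_1$ to rewrite $v_2$ in terms of the two quantities we control. Concretely,
\[
v_2 = x_2 - Ax_0 = (x_2 - x_1) + (x_1 - Ax_0) = (x_2 - x_1) + v_1 .
\]
Applying the triangle inequality (recall $\lVert \cdot \rVert$ is the infinity norm throughout) together with the given estimates $\lVert x_1 - x_2 \rVert \leq \varepsilon$ and $\lVert v_1 \rVert \leq \alpha_1$ then yields
\[
\lVert v_2 \rVert \leq \lVert x_2 - x_1 \rVert + \lVert v_1 \rVert \leq \varepsilon + \alpha_1 .
\]
Finally, invoking the standing assumption $\varepsilon \leq \alpha_2 - \alpha_1$ gives $\varepsilon + \alpha_1 \leq \alpha_2$, hence $\lVert v_2 \rVert \leq \alpha_2$, which completes the argument.

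Geometrically, this is nothing more than the observation that the one-step reachable set from $x_0$ under offsets of norm at most $\alpha$ is the infinity-norm ball of radius $\alpha$ centered at $Ax_0$, and that an $\varepsilon$-perturbation of a point in the $\alpha_1$-ball stays inside the $\alpha_2$-ball as soon as $\varepsilon \leq \alpha_2 - \alpha_1$. I do not expect any genuine obstacle here: the result is a direct consequence of the triangle inequality, and the entire subtlety lies only in choosing the correct candidate $v_2$ so that the affine constraint and the norm bound can be decoupled. The real work in this part of the paper will instead come in Proposition~\ref{prop:cell size cont}, where this one-step estimate must be combined with the non-empty-interior guarantee of Proposition~\ref{prop:nonemptyint cont} and propagated over the $k$ steps needed to convert an offset bound into an admissible piecewise-constant control in $\mathcal{U}_j$; Lemma~\ref{lemma:discretization} is the elementary building block that feeds into that more involved argument.
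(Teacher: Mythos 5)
Your construction $v_2 = x_2 - Ax_0 = v_1 + (x_2 - x_1)$ is exactly the choice the paper makes, and your triangle-inequality verification simply spells out the details the paper leaves implicit. The proposal is correct and follows essentially the same approach as the paper's proof.
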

\begin{proof}
 Please refer to Fig.~\ref{fig:onestep_completeness}. $v_2 = v_1+x_2-x_1$ proves the result.  
\end{proof}
 \begin{proposition}{\label{prop:cell size cont}}
 Consider a system $H$ satisfying {\em Assumption}~\ref{assmptn: ctb stb
   cont}, with sequence of families of control
 functions $\{\mathcal{U}_j \}_{j=1}^{\infty}$, as in
 Proposition~\ref{prop:nonemptyint cont}. Let $j \in \naturals$ and $q\in \mathcal{Q}$ be fixed. 
Then, if the cell size $\varepsilon_j(q)$
 for a cell $\xi \in G(q)$ satisfies the bound,
$\varepsilon_j(q) \leq (1-l_j) /\lVert {\Gamma}^{+}_q \rVert$, 
where ${\Gamma_q} = [B_q\; A_qB_q \ldots A_q^{k-1}B_q]$ and
${\Gamma_q}^{+}$ is the pseudo inverse,
 then $\xi \cap \mathcal{R}^{\circ}(\mathcal{U}_j) \cap \mathcal{I}(q,\cdot)\neq \emptyset \Rightarrow 
\xi \subset \mathcal{R}(\mathcal{U})\cap \mathcal{I}(q,\cdot)$.
\end{proposition}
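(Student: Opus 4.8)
The plan is to reduce the statement to a purely reachability claim, namely $\xi \subseteq \mathcal{R}(\mathcal{U})$, and then exploit the slack $1-l_j$ between the control budgets of $\mathcal{U}_j$ and $\mathcal{U}$ to ``absorb'' the spatial extent of the cell. First I would fix $k$ with $\lceil n/m\rceil \le k \le n$ as in Proposition~\ref{prop:nonemptyint cont}, so that $\Gamma_q = [B_q\; A_qB_q \ldots A_q^{k-1}B_q]$ has full row rank $n$, and write the $k$-step dynamics in aggregated form $x(k+i) = A_q^k x(i) + \Gamma_q \tilde u$, with $\tilde u \in \reals^{km}$. Then I pick a witness $x_1 \in \xi \cap \mathcal{R}^{\circ}(\mathcal{U}_j) \cap \mathcal{I}(q,\cdot)$; since $x_1$ is reachable using a control in $\mathcal{U}_j$, I take $x_0$ to be the state visited $k$ steps before $x_1$ along the witnessing trajectory, so that $x_1 = A_q^k x_0 + \Gamma_q \tilde u_1$ with $x_0 \in \mathcal{R}(\mathcal{U}_j)$ and $\lVert \tilde u_1 \rVert \le l_j$ (each block $u(i)$ obeying $\lVert u(i) \rVert \le l_j$).

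For the core step, let $x_2 \in \xi$ be arbitrary; since $\xi$ has size $\varepsilon_j(q)$ in the infinity norm, $\lVert x_1 - x_2 \rVert \le \varepsilon_j(q)$. Applying Lemma~\ref{lemma:discretization} with $A = A_q^k$ and $v_1 = \Gamma_q \tilde u_1$, I set $v_2 = v_1 + (x_2 - x_1)$, which gives $x_2 = A_q^k x_0 + v_2$. It then remains to realize $v_2$ by an admissible input: I define $\tilde u_2 = \tilde u_1 + \Gamma_q^{+}(x_2 - x_1)$. Because $\Gamma_q$ has full row rank, $\Gamma_q \Gamma_q^{+} = I_n$, whence $\Gamma_q \tilde u_2 = v_1 + (x_2 - x_1) = v_2$ and therefore $A_q^k x_0 + \Gamma_q \tilde u_2 = x_2$. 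The norm bound closes the argument:
\[
\lVert \tilde u_2 \rVert \le \lVert \tilde u_1 \rVert + \lVert \Gamma_q^{+} \rVert \, \lVert x_2 - x_1 \rVert \le l_j + \lVert \Gamma_q^{+} \rVert \, \varepsilon_j(q) \le l_j + (1 - l_j) = 1,
\]
using precisely the hypothesis $\varepsilon_j(q) \le (1-l_j)/\lVert \Gamma_q^{+} \rVert$. Hence $x_2$ is reachable from the reachable state $x_0$ in $k$ steps with $\lVert u(i) \rVert \le 1$, i.e. $x_2 \in \mathcal{R}(\mathcal{U})$. As $x_2$ was arbitrary, $\xi \subseteq \mathcal{R}(\mathcal{U})$; the containment $\xi \subseteq \mathcal{I}(q,\cdot)$ comes from the fact that the cells of $G(q)$ are laid inside the invariant, yielding $\xi \subseteq \mathcal{R}(\mathcal{U}) \cap \mathcal{I}(q,\cdot)$.

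The main obstacle, and the crux of the proposition, is the conversion of a spatial displacement in $\reals^n$ into an input-budget increment in $\reals^{km}$, which is exactly what the pseudoinverse accomplishes: the right-inverse identity $\Gamma_q \Gamma_q^{+} = I_n$ (valid only because $k \ge \lceil n/m \rceil$ forces $\operatorname{rank}(\Gamma_q) = n$, per Assumption~\ref{assmptn: ctb stb cont} and Proposition~\ref{prop:nonemptyint cont}) lets me steer to any target in the cell, while $\lVert \Gamma_q^{+} \rVert$ quantifies the cost of that correction and dictates the cell-size bound. A secondary point to handle carefully is the existence of the $k$-step predecessor $x_0$: for trajectories shorter than $k$ this must be arranged by prepending an equilibrium prefix (permissible since the origin is a stable equilibrium by Assumption~\ref{assmptn: ctb stb cont}), so that every point of $\mathcal{R}(\mathcal{U}_j)$ is reachable in at least $k$ steps.
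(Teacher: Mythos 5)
Your proof is correct and follows essentially the same route as the paper's: aggregate the dynamics over $k$ steps, use Lemma~\ref{lemma:discretization} (the shift $v_2 = v_1 + x_2 - x_1$) together with the right-inverse property of $\Gamma_q^{+}$ to turn the spatial offset within the cell into an input correction of norm at most $\lVert \Gamma_q^{+}\rVert\,\varepsilon_j(q) \le 1 - l_j$. Your write-up is in fact more explicit than the paper's (which compresses the argument into a ``conservative bound'' $\lVert v\rVert \le l/\lVert\Gamma_q^{+}\rVert$ and a citation of the lemma), and you additionally flag the $k$-step-predecessor issue that the paper leaves implicit.
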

\begin{proof}
The dynamics of the system over $k$ steps can be written as $x(k+i)= A_q^kx(i) + v$, with 
$v=\Gamma_q \tilde u$. ${\Gamma_q} = [B_q\; A_qB_q \ldots A_q^{k-1}B_q]$ and 
   $\tilde u \in \reals^{km}, \lVert \tilde u \rVert \leq l$ is the augmented input over $k$ steps. This implies that $\tilde u = \Gamma_q^{+}v$, where $\Gamma_q^+$ is the pseudo inverse of $\Gamma_q$.  Finding the tightest bounds on $v$ is hard. However a conservative bound on $v$ is $\lVert v \rVert \leq l/\Gamma^+$.
 Now, let $v_1= \Gamma_q \tilde u_1$ and $v_2= \Gamma_q \tilde u_2$ with $\tilde u_1,\tilde u_2 \in \reals^{km}$ and $\lVert \tilde u_1\rVert \leq l_j $ and $\lVert \tilde u_2 \rVert \leq 1$. 
 Let $\mathcal{R}(z,k)$ denote set of points reachable by the system by
 using  $u \in \mathcal{U}$ with $t_f = k$, and 
 $\mathcal{R}_j^{\circ}(z,k)$ the interior of the set of points reachable by the system by
 using  $u' \in \mathcal{U}_j$ with $t_f=k$, under continuous evolution, starting
 from $z$. This is shown in Fig.~\ref{fig:kstep_completeness}.
  The result of Lemma~\ref{lemma:discretization} implies that for $\varepsilon_q \leq (1-l_j)/\Gamma_q^+$,
  $\xi \cap \mathcal{R}_j^{\circ}(z,k) \neq \emptyset \Rightarrow 
\xi \subset \mathcal{R}(z,k)$. This proves that $\xi \cap \mathcal{R}^{\circ}(\mathcal{U}_j) \cap \mathcal{I}(q,\cdot)\neq \emptyset \Rightarrow 
\xi \subset \mathcal{R}(\mathcal{U})\cap \mathcal{I}(q,\cdot)$.  
\end{proof}

\section {Incremental Grid Sampling Methods}
\label{sec:sampling methods}
 As we discussed in Section~\ref{sec:basic idea}, for each location $q \in \mathcal{Q}$, we use a multi-resolution grid. Assume for this section that, $\operatorname{card}{(\mathcal{Q})}=1$, and that, $G$ is a $n$-dimensional unit cube, whose origin is the origin of coordinate axis. Using an iteratively refined multi-resolution grid ensures that for a given $j$, $G_u$ with resolution $\varepsilon_{j}$ does not have to be built from scratch. 
   For our work, we will use multi resolution {\em classical grids}. A multi-resolution classical grid at resolution level $r$ has $2^{rn}$ points. Moreover, it contains all the points of all the resolution levels $r' < r$. Every grid point $P_i$, in a multi resolution classical grid at resolution level $r$, can be written as 
   $P_i=\{\frac{a_1}{2^r},\ldots, \frac{a_n}{2^r}\}$ with $0 \leq a_1, \ldots, a_n \leq 2^r-1$,  $a_1,\ldots,a_n \in \naturals$, with the corresponding grid region (that we call as {\em cells}) $\xi(r,i)=[\frac{a_1,a_1+1}{2^r})\times \ldots [\frac{a_n,a_n+1}{2^r})$. Here $i$ is the unique identifier for each cell $\xi$, and it denotes its order in the generated samples.
  For any resolution level $j$, the resolution level $j+1$ satisfies $\varepsilon_{j+1}=\varepsilon_j/2$. For any two cells $\xi_1,\xi_2$, at resolution levels $r_1,r_2$ respectively, with $r_1<r_2$, and, $\xi_1 \cap \xi_2 \neq \emptyset$, $\xi_1$ will be called as the {\em parent} of $\xi_2$ at resolution $r_1$, and, $\xi_2$ as a {\em child} of $\xi_1$ at resolution $r_2$.
\iflong 
  
    Ideally, one would like to use an {\em ordering} of samples that minimizes the discrepancy to find a counter-example as soon as possible. But discrepancy-optimal orderings take exponential time to compute, and exponential space to be stored (in $n$; see ~\cite{Lin.Yer.Lav:04}). For our work, we use orderings that maximize the {\em mutual distance} between sampled grid points (see ~\cite{Lin.Yer.Lav:04}). The mutual distance of a set $\mathcal{K}$ is defined as
  $\rho_m(\mathcal{K})=\min_{x,y \in \mathcal{K}} \rho(x,y)$. These orderings can be represented by using generator matrices, that are binary matrices of size $n\times n$ and represent {\em bijective linear transformation} over $\mathbb{Z}_2$. Any sample with identifier $i$, at resolution $j$, can be generated by using the ordering recursively on the bit representation of $i$.
  This method can also be used to bias the search towards the unsafe set $\mathcal{T}$, by possibly changing the generator matrix. 
  \else
  For our work, we use orderings that maximize {\em mutual distance} (see~\cite{Lin.Yer.Lav:04}).
  \fi
 In Fig.~\ref{fig:optimal ordering}, we show the samples for few resolution levels in 2 dimensions. As can be seen, cells whose identifiers are close to each other (e.g. Resolution $=2$ or $3$, $i=0,1$) are spaced quite far apart. \iflong This is in sharp contrast to the ordering that one would get by using a naive sampling scheme, like scanning for example.\fi
\iflong
 \begin{figure}[hbt]
\centering
\subfigure[\scriptsize Resolution$=1$]{
\includegraphics[scale=0.2,angle=0]{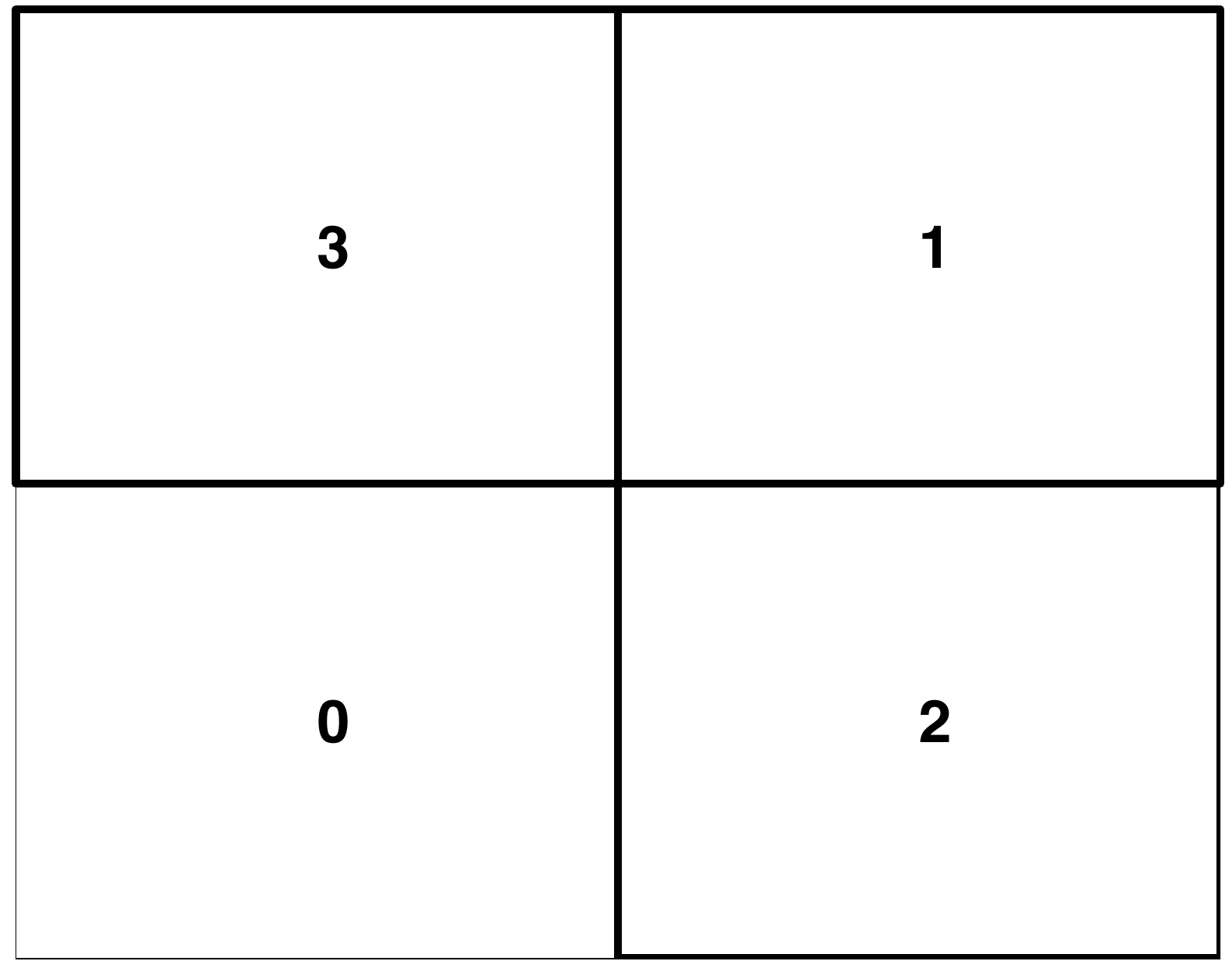}}\hfill
 \subfigure[\scriptsize Resolution$=2$]{
\includegraphics[scale=0.2,angle=0]{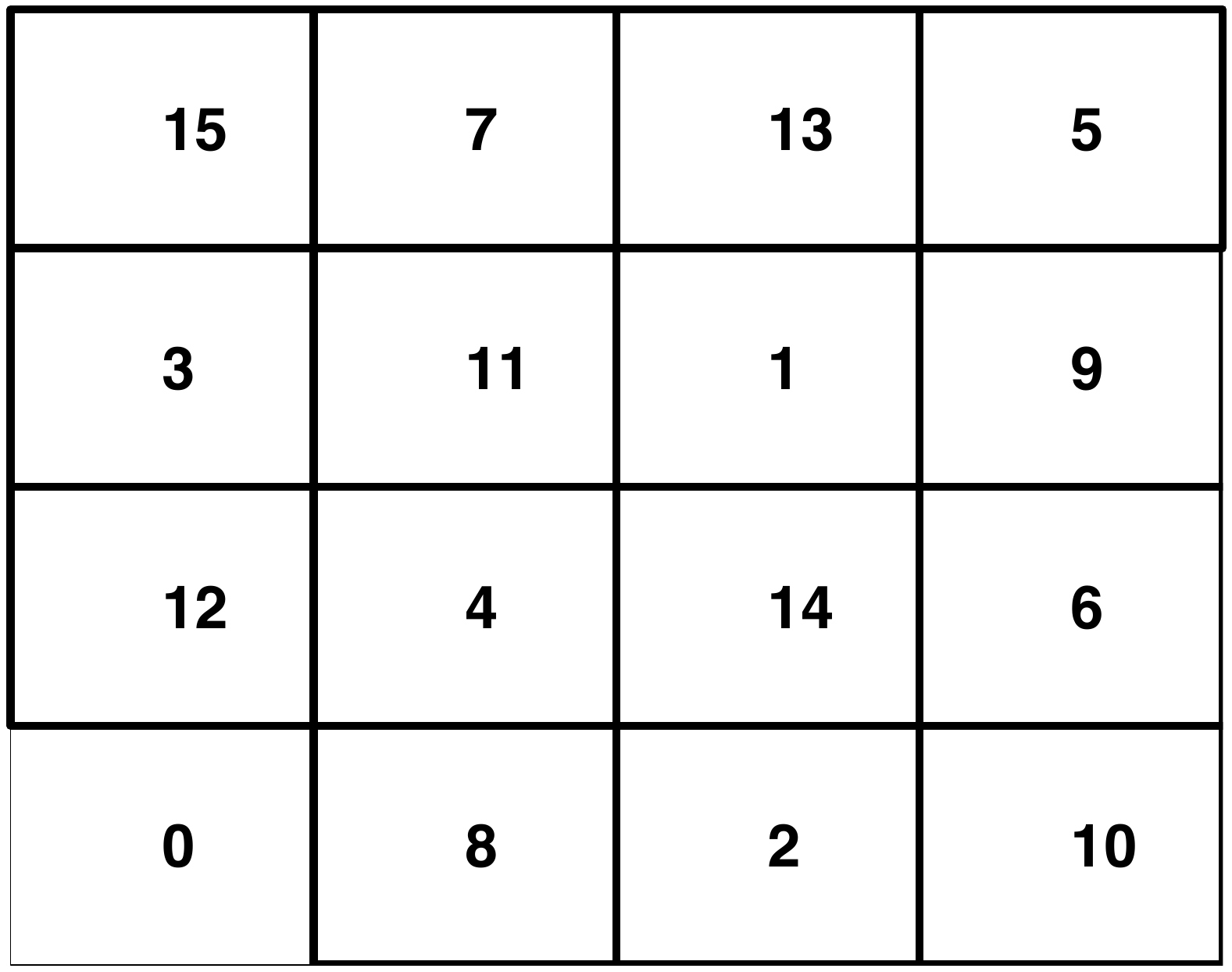}}\hfill
 \subfigure[\scriptsize Resolution$=3$]{
\includegraphics[scale=0.2,angle=0]{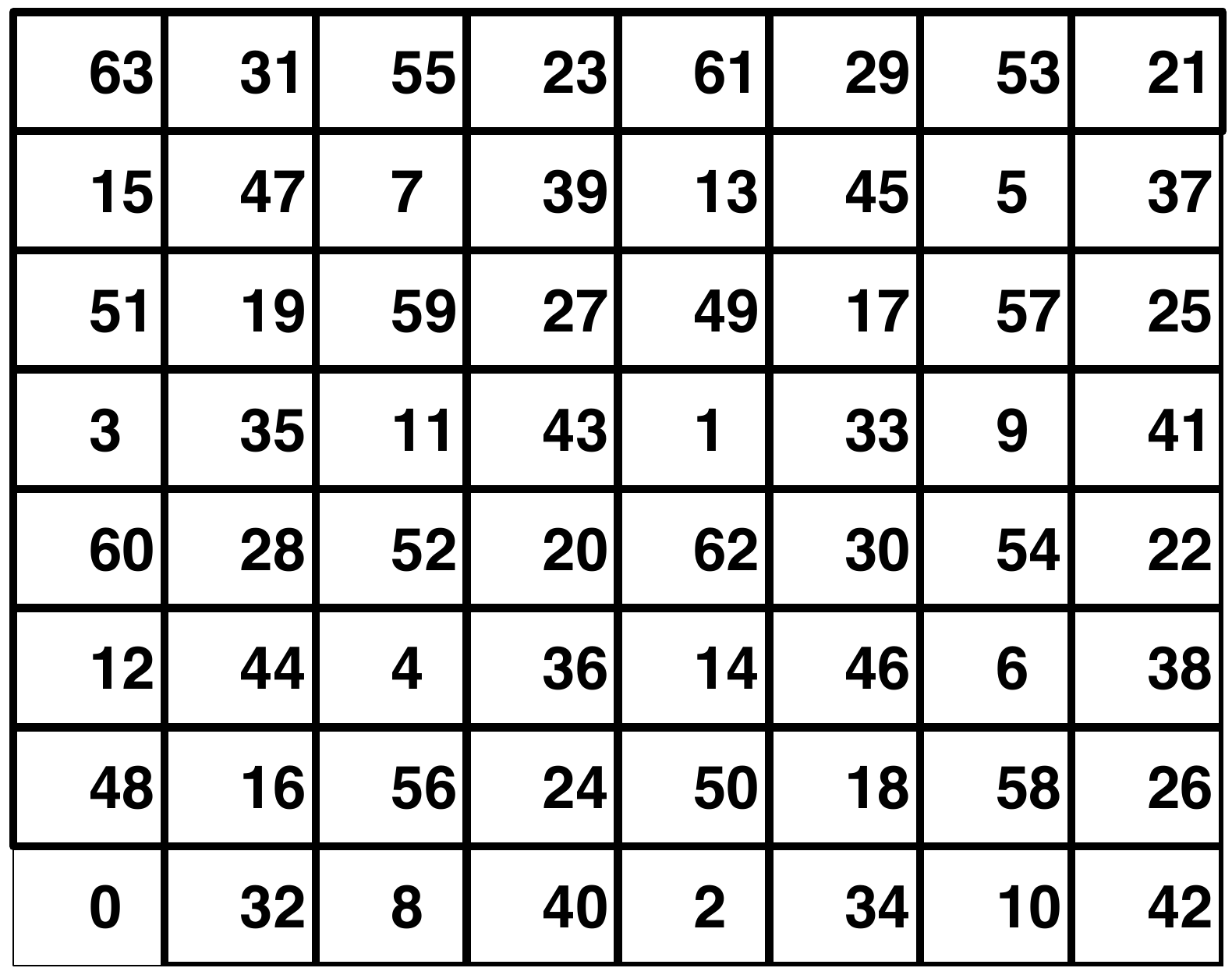}}\hfill
\caption{Ordering of samples based on mutual distance}
\label{fig:optimal ordering}
\end{figure}
\else
\begin{figure}[hbt]
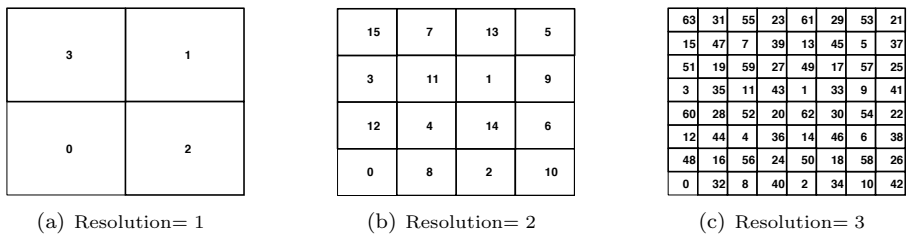

\centering
\subfigure[\scriptsize Resolution$=1$]{
\includegraphics[scale=0.19,angle=0]{figures/ordering_1.pdf}}\hfill
 \subfigure[\scriptsize Resolution$=2$]{
\includegraphics[scale=0.19,angle=0]{figures/ordering_2.pdf}}\hfill
 \subfigure[\scriptsize Resolution$=3$]{
\includegraphics[scale=0.19,angle=0]{figures/ordering_3.pdf}}\hfill
\caption{Ordering of samples based on mutual distance}
\label{fig:optimal ordering}
\end{figure}
\fi
 \section{Algorithms for Resolution-Complete Safety Falsification}
\label{sec:rc algorithms}
 In this section, we first explain the procedure for incremental construction of trajectories used by the algorithms. We then present the algorithms, and in Section~\ref{sec:rc proofs}, prove their resolution completeness.
\subsection{Incremental construction of trajectories}
\label{subsec:explorer dynamics cont}
 As stated in Section~\ref{subsec:control functions cont}, the algorithms
 use piece-wise constant control functions satisfying
 Proposition~\ref{prop:nonemptyint cont}. As discussed in the proofs of
 Propositions~\ref{prop:nonemptyint cont},~\ref{prop:cell size cont}, for a given discrete state $q \in \mathcal{Q}$, the dynamics can be
 simulated by using $x(k)= A_q^k x_0+v$, where $x_0 \in \mathcal{I}(q,\cdot)$, and, the set of feasible
 inputs is given by $v = {\Gamma_q} \tilde u$,
$\tilde u \in \reals^{km}$, $\lVert \tilde u \rVert \leq \tilde l_j(q)= 1- \varepsilon_j(q)
 \lVert {\Gamma_q}^{+} \rVert$. Note, that to be less conservative, we are making the input bound $\tilde l_j(q)$ dependent on $q\in \mathcal{Q}$ and the discretization $\varepsilon_j(q)$. Hence, the algorithms use the input bounds based on the space discretization and dynamics to incrementally build trajectories $k$ steps at a time, by formulating the $k$-step reachability problem as a linear program. To solve these linear programs more efficiently, we use ideas from~\cite{Borelli.Bemporad.ea:03} for {\em multi parametric linear programming}.

 We use the following additional notation in the remaining of this section. $G =
 \cup_{q \in \mathcal{Q}} G(q)$ denotes
 the union of all location specific grids in the
 algorithm. $j_0$ represents the smallest $j$, such that 
$\tilde l_{j_0}(q)= 1- \varepsilon_{j_0}(q) \lVert {\Gamma_q}^{+} \rVert>0, \forall q \in \mathcal{Q}$. For all $j>j_0$, and, $\forall q \in \mathcal{Q}$, $\tilde l_{j}(q)= 1- \varepsilon_{j}(q) \lVert {\Gamma_q}^{+} \rVert$. 
$\varepsilon_j$ is a $Q$-dimensional vector
 with each element denoting value for a given location
 $q$. Since $\tilde l_j(q)$ can be different for each discrete location $q \in \mathcal{Q}$,
 at termination of the algorithms, completeness will be guaranteed with respect to $l_j=\min_{q \in \mathcal{Q}} \tilde l_j(q)$.
\subsection{Breadth First Search with Branch and Bound}
\label{sec:algo bfs-bb}
The proposed algorithm is shown in Fig.~\ref{fig:algo bfs-bb}. We will refer to this algorithm as the \begin{ttfamily} BFS-BB Safety Falsification\end{ttfamily} algorithm. 
A few iterations of the algorithm for a first order discrete-time LTI system with single discrete mode, are shown in Fig.~\ref{fig:bfs-bb algo execution} for a maximum resolution level of $j=3$.
\iflong
\begin{figure}[htb]
\begin{minipage}[t]{0.5\textwidth}
\begin{center}
\includegraphics[scale=0.25,angle=0]{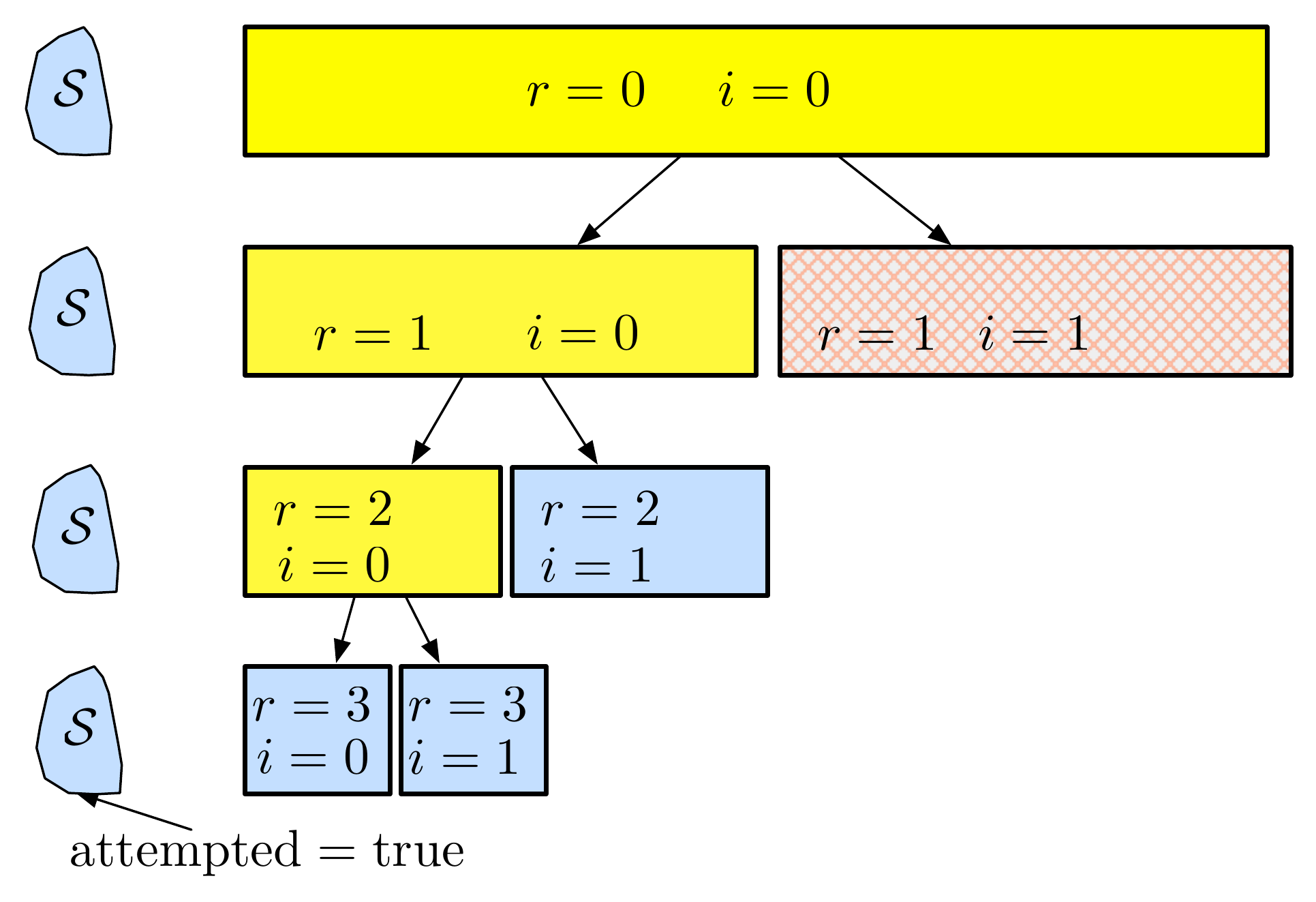}\\
{\scriptsize Exploration starting from $\mathcal{S}$}
\end{center}
\end{minipage}
\begin{minipage}[t]{0.5\textwidth}
\begin{center}
\includegraphics[scale=0.25,angle=0]{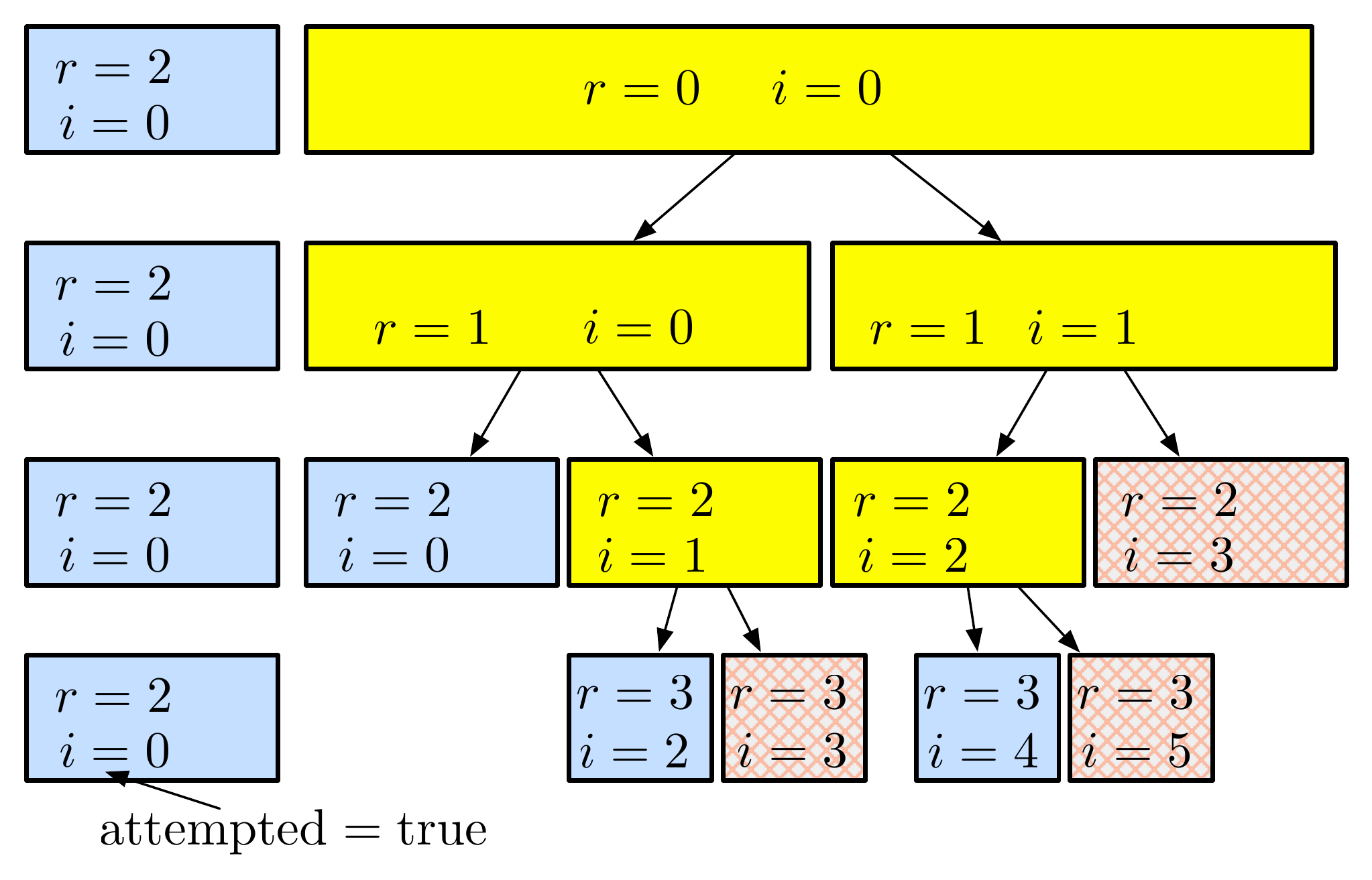}\\
{\scriptsize Exploration starting from $\xi(r=2,i=0)$}
\end{center}
\end{minipage}
\caption{An execution of {\ttfamily BFS-BB Safety Falsification} algorithm}
\label{fig:bfs-bb algo execution}
\end{figure}
\else
\begin{figure}[htb]
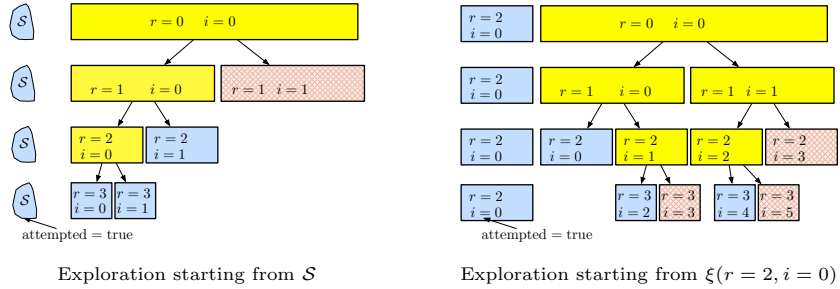

\begin{minipage}[t]{0.5\textwidth}
\begin{center}
\includegraphics[scale=0.23,angle=0]{figures/bfs-bb-execution1}\\
{\scriptsize Exploration starting from $\mathcal{S}$}
\end{center}
\end{minipage}
\begin{minipage}[t]{0.5\textwidth}
\begin{center}
\includegraphics[scale=0.23,angle=0]{figures/bfs-bb-execution2}\\
{\scriptsize Exploration starting from $\xi(r=2,i=0)$}
\end{center}
\end{minipage}
\caption{An execution of {\ttfamily BFS-BB Safety Falsification} algorithm}
\label{fig:bfs-bb algo execution}
\end{figure}
\fi
Cells marked in yellow are the ones that need to be explored further based on branch and bound strategy. Cells marked in red are conclusively not reachable from $\xi_f$ (where $\xi_f \in G_f$ is $\mathcal{S}$ at the left and $\xi(r=2,i=0)$ at the right). Cells marked in blue are the ones that are found to be reachable.
\begin{figure}[htb]
\begin{minipage}[t]{0.95\linewidth}
\algsetup{linenodelimiter= }
\scriptsize
\hrule
{\ttfamily BFS-BB Safety Falsification}
($H$, $j_{\mathrm{max}}$, $G $)
\hrule
\begin{algorithmic}[1]
\label{alg:BFS-BB}
\STATE $\{\mathit{locations},\mathit{Found}\} \leftarrow \{\emptyset,\emptyset \}$
\STATE $\{j_0,G_f,\mathit{locations}\} \leftarrow$ {\ttfamily init($H.\mathcal{S}$)}
\COMMENT{Initialization step}
\STATE $j\leftarrow j_0$
\WHILE {($j \leq j_{\mathrm{max}}  \wedge  \neg \mathit{Found}$)}
\FORALL{($q \in \mathit{locations} $)}
\STATE {\ttfamily update\_explorer($q$,$j$)} \COMMENT{Update simulation parameters for current $j,q$}
\FORALL{($\xi_\mathrm{f} \in G_f(q) \wedge \neg \mathit{Found}$)}
\FORALL{($r \in \{1,2,\ldots,j\}$)}
\FORALL{($i\in\{0,\ldots,2^{r n}-1\}$)}
\IF{($\mathit{Found}$)}
\STATE return $(G_f,\mathit{Found},\varepsilon_j)$ \COMMENT{Terminate if unsafe}
\ENDIF
\STATE $\xi_t \leftarrow ${\ttfamily generate\_sample}($i,r,q$) \COMMENT{Sample generation}
\IF{({\ttfamily check\_feasible}($\xi_t$))} 
\STATE $\mathit{success} \leftarrow$ {\ttfamily expand}($\xi_f,\xi_t$) \COMMENT{Expansion step}
\IF{($\mathit{success}$)}
\STATE {\ttfamily add\_node}($\xi_f,\xi_t$) \COMMENT{Update of $G_f(q)$}
\STATE $\mathit{Found} \leftarrow$ {\ttfamily check\_unsafety}($\xi_t$) \COMMENT{Check for unsafety} 
\ENDIF
\ENDIF
\ENDFOR
\ENDFOR
\STATE $\xi_f.\mathit{attempted} \leftarrow \mathrm{true}$ \COMMENT{Mark as attempted}
\ENDFOR
\STATE $\mathit{locations}$.{\ttfamily erase($q$)}
\ENDFOR
\STATE $j+1 \leftarrow G$.{\ttfamily refine(j)} \COMMENT{Refinement step}
\ENDWHILE
\STATE return $(G_f,\mathit{Found},\varepsilon_j)$
\end{algorithmic}
\hrule
\end{minipage}
\hfill 
\caption{{\ttfamily BFS-BB Safety Falsification} algorithm}
\label{fig:algo bfs-bb}
\end{figure}

{\bf Data structure:} Each cell $\xi$ has its identifier $i$, resolution level $r$, and boolean variables $\mathit{attempted, filled}$. $\mathit{attempted}$ is true, if, $\xi \in G_f(q)$ has been attempted for expansion (and the cell is called attempted). The variable $\mathit{filled}$ is true (and the cell is called as filled) if, the cell $\xi$ has size $\epsilon_j(q)$ (for some $j$) and is found to be reachable using $\tilde l_j(q)$, or, all its children at resolution $j+1$ are filled. $G_f$ is implemented as a {\em hash map} to enable quick look up for existing cells in $G_f$.

{\bf Initialization step:} In the first step, the algorithm computes the first feasible resolution level $j_0$ and  initializes $G_f (q)$ $\forall q \in \mathcal{Q}$. Next all $q\in\mathcal{Q}$ are added to $\mathit{locations}$ for which $G_f \neq \emptyset$. This happens in {\ttfamily init($H.\mathcal{S}$)} function in the algorithm.
  
{\bf Exploration step:} The grid is searched for reachable cells with
current bounds on input in a recursive Breadth-First-Search (BFS)
fashion along with Branch and Bound (BB) strategy in the algorithm for each location $q
\in \mathit{locations}$ (after updating the simulation
parameters used by the algorithm for location $q$ in {\ttfamily update\_explorer}($q, j$) function
in the algorithm). First, a filled cell $\xi_f$ is chosen in $G_f(q)$. Next, for all depths $r \in \{1,\ldots,j\}$
samples $\xi_t$ are generated in {\ttfamily generate\_sample}($i,r,q$) function (after checking if $\mathit{Found} = \emptyset$). The function {\ttfamily check\_feasible}($\xi_t$) checks feasibility of $\xi_t$ based on {\em branch and bound} condition from coarse resolution levels, and the fact that its {\em parent} might have been marked as filled already at some coarser resolution level $r' < r$.
 The {\ttfamily expand}($\xi_f,\xi_t$) function attempts to solve the $k$-step reachability problem as discussed in Section~\ref{subsec:explorer dynamics cont}. 
 If it is successful then the {\ttfamily add}($\xi_f,\xi_t$) function adds $\xi_f$ to $G_f(q)$.

{\bf Unsafety check:} Each newly added cell $\xi_t$ in  {\ttfamily add}($\xi_f,\xi_t$) step is checked for intersection with the unsafe set $\mathcal{T}$, if, it is filled, in {\ttfamily check\_unsafety}($\xi_t$) function.

{\bf Discrete transition step:} For a given location $q$, each cell
$\xi \in G(q)$ that is found to
be reachable from $G_f(q)$ is checked for all the outgoing discrete
transitions from $q$. If a guard $\mathcal{G}(q,q')$ is found to be enabled,
then $\mathcal{G}(q,q') \cap \xi$ is added as a filled cell to $G_f(q')$ using the function {\ttfamily add}($\xi, \mathcal{G}(q,q') \cap \xi$) and
$q'$ is added to $\mathit{locations}$. This happens internally in 
{\ttfamily add}($\xi_f,\xi_t$) function.

{\bf Refinement step:} When all the cells $\xi_f \in G_f(q)$ have
been explored for expansion $\forall q \in \mathcal{Q}$, and $\mathit{locations} = \emptyset$, 
then the grid resolution is changed using the relation
$\varepsilon_{j+1}(q) =$$\varepsilon_j(q)/2$,
and the input bounds are
changed from $\tilde l_j(q)$ to $\tilde l_{j+1}(q)$. For all the cells 
$\xi \in G_f(q)$, and, $\forall q \in \mathcal{Q}$,
the variable $\mathit{attempted}$ is reset to false. This happens in
the $G$.{\ttfamily refine(j)} function in the algorithm.

{\bf Termination criteria:} To have a finite termination time, the
refinement procedure is allowed only till $j \leq j_{\mathrm{max}}$, and, no counter example has been found.

\subsection{Resolution-complete Co-RRT}
\label{sec:algo co-rc}
 In this section, we discuss a modified version of the Co-RRT algorithm proposed by us in~\cite{Bhatia.Frazzoli:04} for continuous systems that is resolution-complete. This algorithm can be used for analyzing safety of a continuous system when it starts from equilibrium under the effect of exogenous inputs.  We first state an important lemma regarding reachability of points that are convex combinations of two reachable points.
\begin{proposition}
\label{prop:corc}
 For an origin-stable, continuous system $H$, if $\mathcal{S}=\{0\}$, then for any two reachable points $\psi_1(0,u_1,k_1),\psi_2(0,u_2,k_2), u_1,u_2 \in \mathcal{U}, k_1,k_2 \in \naturals, k_2 \geq k_1$, the convex combination $\psi_{\lambda}=\lambda \psi_1+(1-\lambda) \psi_2, \lambda \in [0,1]$ is also reachable in $k_2$ time steps.
\end{proposition}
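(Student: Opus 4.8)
The plan is to exploit that, because $\mathcal{S}=\{0\}$ and the dynamics are linear, every reachable point is a fixed linear image of the applied input sequence, so the set of points reachable in a prescribed number of steps is convex. The only genuine complication is that $\psi_1$ and $\psi_2$ are reached over different horizons $k_1\le k_2$; once both are exhibited as elements of the \emph{same} reachable set, convexity closes the argument.

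First I would write out the solution of the difference equation from the origin. Since the system has a single discrete mode I drop $q$ and write $x(i+1)=Ax(i)+Bu(i)$. Using $x(k)=A^k x(0)+\sum_{i=0}^{k-1}A^{k-1-i}Bu(i)$ together with $x(0)=0$, a point reached in $k$ steps is $\psi(0,u,k)=\sum_{i=0}^{k-1}A^{k-1-i}Bu(i)$, which is a fixed linear map applied to the augmented input $\tilde u=(u(0),\dots,u(k-1))$ ranging over the convex set $U^k$. Hence $\mathcal{R}(0,k)$, the set of points reachable from the origin in exactly $k$ steps, is the image of a convex set under a linear map, and is therefore convex.

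The key step is to show $\mathcal{R}(0,k_1)\subseteq\mathcal{R}(0,k_2)$ whenever $k_2\ge k_1$. Here I would use that the origin is an equilibrium under zero input, i.e. $A\cdot 0+B\cdot 0=0$: starting from $0$ and applying $u(i)=0$ for the first $k_2-k_1$ steps keeps the state at the origin, after which applying $u_1$ reproduces $\psi_1$. Concretely, the delayed (zero-padded) input $u'$ defined by $u'(i)=0$ for $i<k_2-k_1$ and $u'(i)=u_1(i-(k_2-k_1))$ otherwise satisfies $\psi(0,u',k_2)=\psi_1$ after re-indexing the convolution sum. Since $u'(i)\in U$ for every $i$, this gives $\psi_1\in\mathcal{R}(0,k_2)$, while $\psi_2\in\mathcal{R}(0,k_2)$ holds by hypothesis.

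Finally, with both $\psi_1,\psi_2\in\mathcal{R}(0,k_2)$ and $\mathcal{R}(0,k_2)$ convex, the convex combination $\psi_\lambda=\lambda\psi_1+(1-\lambda)\psi_2$ lies in $\mathcal{R}(0,k_2)$, i.e. is reachable in $k_2$ steps, as claimed. I expect the main obstacle to be the bookkeeping in the embedding step: one must check that prepending zeros respects the input constraint and that the re-indexed sum really equals $\psi_1$. This is exactly where the equilibrium-at-origin property is indispensable, and it is also the reason the hypothesis $\mathcal{S}=\{0\}$ cannot be weakened to an arbitrary initial set without destroying the convexity that the whole argument rests on.
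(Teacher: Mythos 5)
Your proof is correct and follows essentially the same route as the paper: both arguments write a point reached from the origin as a fixed linear map applied to the stacked input sequence, embed the $k_1$-step input into a $k_2$-step input by zero-padding (valid because $0\in U$, so the state idles at the equilibrium), and conclude by convexity of $U$ that the convex combination of the two augmented inputs is admissible. The paper simply performs this computation directly on $\psi_\lambda$ rather than phrasing it as convexity of $\mathcal{R}(0,k_2)$ plus the inclusion $\mathcal{R}(0,k_1)\subseteq\mathcal{R}(0,k_2)$, but the mechanism is identical.
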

\begin{proof}
$\psi_1(0,u_1,k_1)= [B AB \ldots A^{k_1-1}B] u_1$, 
$\psi_2(0,u_1,k_2)= [B AB \ldots A^{k_2-1}B]$$u_2$. $u_1,u_2 \in \reals^{k_1m},\reals^{k_2m}$ respectively. Let $\tilde B_k = [B AB \ldots A^{k-1}B] $, and, $\theta_{c}\in \reals^c$, denote the zero vector. This implies that $\psi_{\lambda} = \lambda \tilde B_{k_2} [u_1' \theta_{m(k_2-k_1)}']' + (1-\lambda) \tilde B_{k_2}u_2$.
 Since $U$ is convex, $u_{\lambda}=\lambda [u_1' \theta_{m(k_2-k_1)}']' + (1-\lambda), u_2 \in \mathcal{U}$.  
\end{proof}
\begin{corollary}
$\mathcal{R}(\mathcal{U})$ is a convex set for a system $H$ (as in Proposition~\ref{prop:corc}).
\end{corollary}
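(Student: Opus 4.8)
The plan is to read the corollary off directly from Proposition~\ref{prop:corc}, which already contains all of the substance. By definition, a set is convex exactly when it contains the segment joining any two of its points, so it suffices to show that for arbitrary $p_1, p_2 \in \mathcal{R}(\mathcal{U})$ and any $\lambda \in [0,1]$, the point $\lambda p_1 + (1-\lambda) p_2$ again lies in $\mathcal{R}(\mathcal{U})$.

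First I would unpack membership in $\mathcal{R}(\mathcal{U})$ in this setting. Since $H$ is a continuous system (the discrete state is dropped, so $\Omega$ is trivial) with $\mathcal{S} = \{0\}$, the definition of the reachable set forces every $p \in \mathcal{R}(\mathcal{U})$ to have the form $p = \psi(0, u, k)$ for some $u \in \mathcal{U}$ and some horizon $k \in \naturals$. Thus I may write $p_1 = \psi_1(0, u_1, k_1)$ and $p_2 = \psi_2(0, u_2, k_2)$. Proposition~\ref{prop:corc} requires the horizons to be ordered as $k_2 \geq k_1$; if instead $k_1 > k_2$, I relabel by swapping the two points and replacing $\lambda$ by $1-\lambda$, which leaves the combination unchanged because $\lambda p_1 + (1-\lambda) p_2 = (1-\lambda) p_2 + \lambda p_1$. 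After this normalization the proposition applies verbatim and yields that $\lambda p_1 + (1-\lambda) p_2$ is reachable in $k_2$ steps, hence lies in $\mathcal{R}(\mathcal{U})$.

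Since $p_1, p_2$ and $\lambda$ were arbitrary, convexity follows. I expect no genuine obstacle: the entire weight of the argument is borne by Proposition~\ref{prop:corc}, and the only bookkeeping is the observation that every reachable point emanates from the single initial point $0$, together with the harmless ordering of the two horizons. The one detail worth stating explicitly is that reachability within $k_2$ steps already places the combined point in $\mathcal{R}(\mathcal{U})$, which is immediate since $\mathcal{R}(\mathcal{U})$ is the union over all finite horizons.
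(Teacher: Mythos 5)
Your proof is correct and follows exactly the route the paper intends: the paper states this corollary without proof as an immediate consequence of Proposition~\ref{prop:corc}, and your argument (unpacking membership in $\mathcal{R}(\mathcal{U})$ via $\mathcal{S}=\{0\}$, relabeling to enforce $k_2\geq k_1$, and applying the proposition) is just the standard spelling-out of that implication.
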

 The proposed algorithm, called as the \begin{ttfamily} Co-RC Safety Falsification\end{ttfamily} algorithm, is shown in Fig.~\ref{fig:algo corc}.
 In Fig.~\ref{fig:corc algo execution}, we show a few iterations of the algorithm for a second order system.
\iflong
\begin{figure}[htb]
\centering
\subfigure[\scriptsize Initialization step]{
\includegraphics[scale=0.25,angle=0]{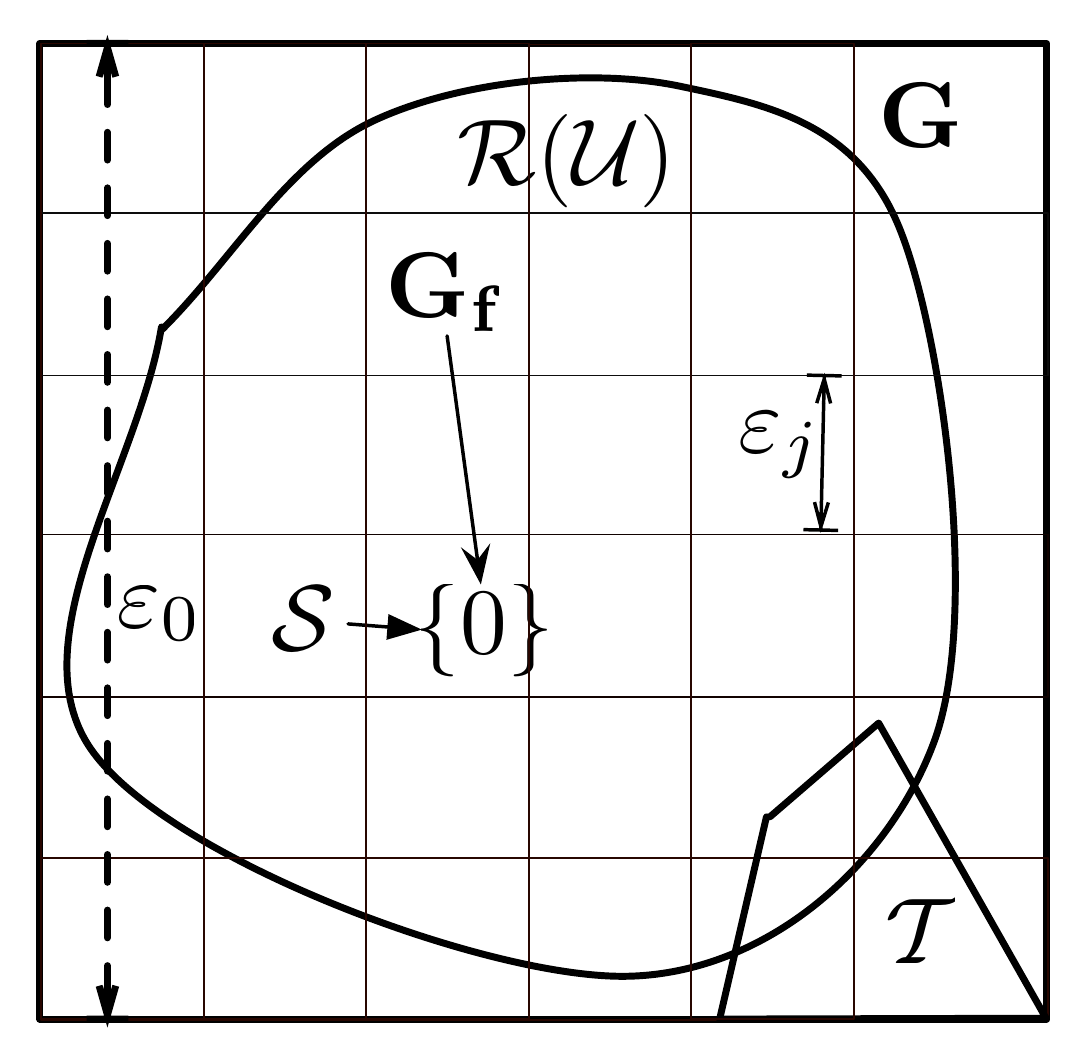}}\hfill
\subfigure[\scriptsize Exploration step]{
\includegraphics[scale=0.25,angle=0]{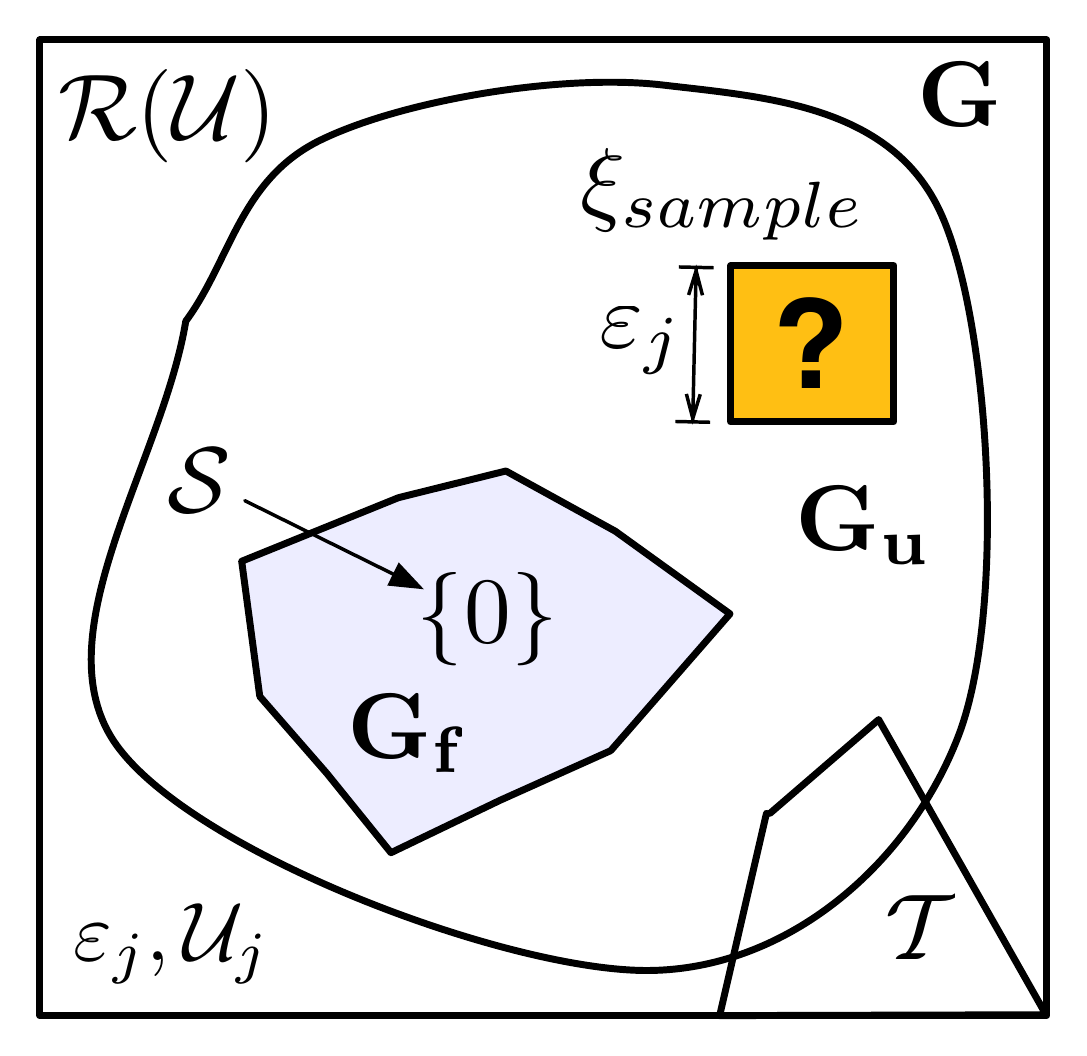}}\hfill
\subfigure[\scriptsize Refinement step]{
\includegraphics[scale=0.25,angle=0]{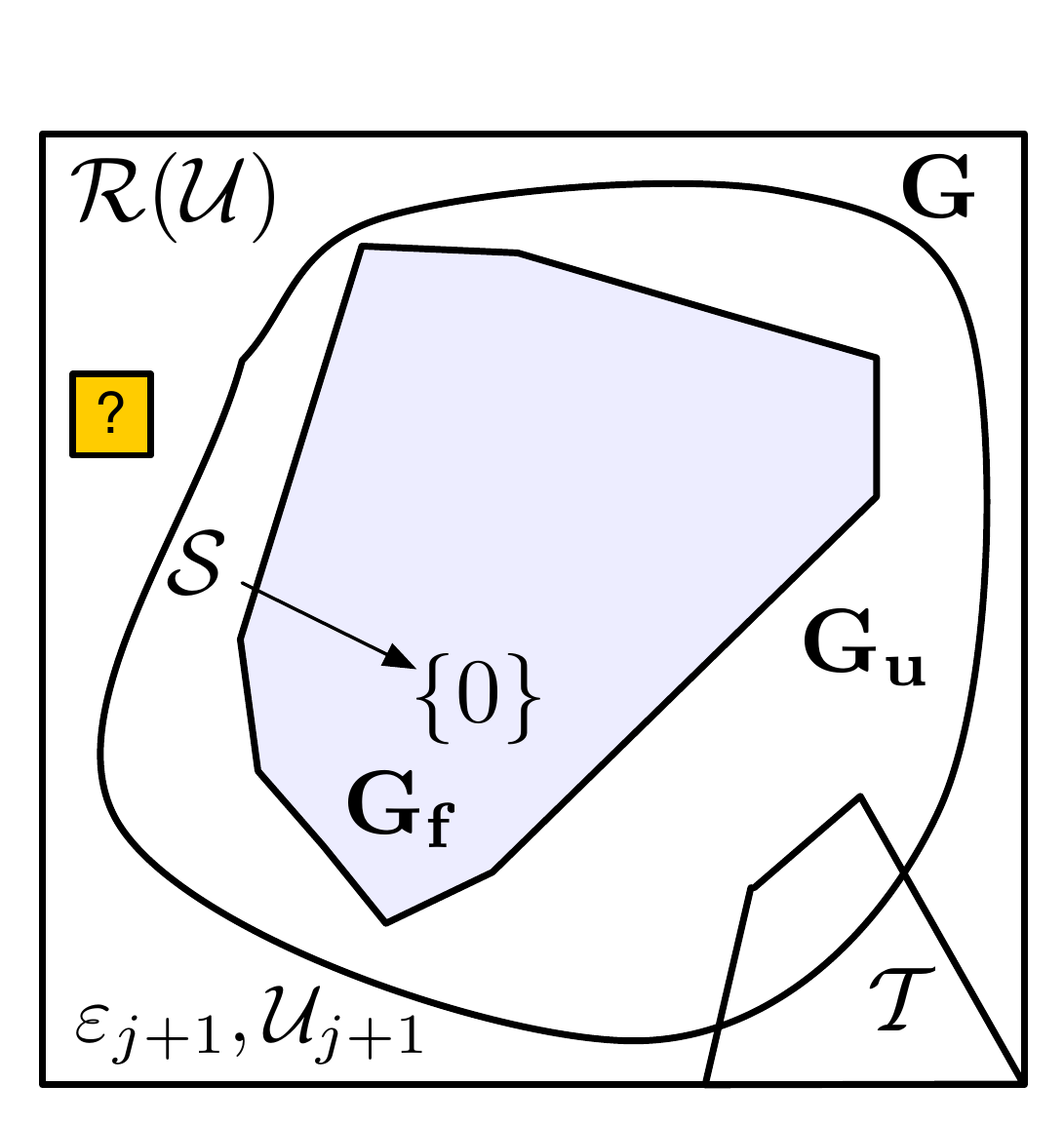}}\hfill
\subfigure[\scriptsize Termination step]{
\includegraphics[scale=0.25,angle=0]{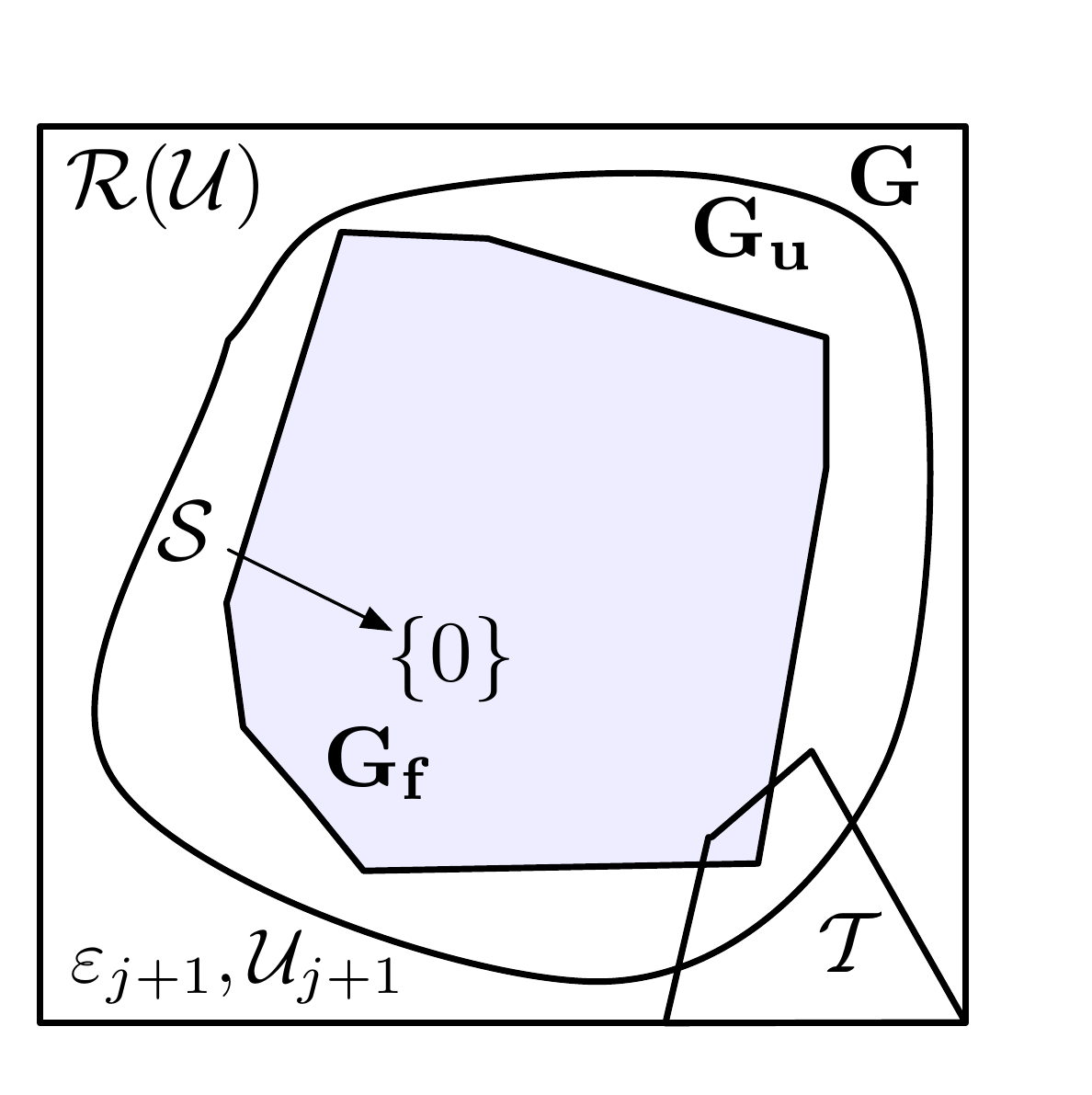}}\hfill
\caption{An execution of {\ttfamily Co-RC Safety Falsification} algorithm}
\label{fig:corc algo execution}
\end{figure}
\else
\begin{figure}[htb]
\centering
\subfigure[\scriptsize Initialization step]{
\includegraphics[scale=0.24,angle=0]{figures/corc-expand0}}\hfill
\subfigure[\scriptsize Exploration step]{
\includegraphics[scale=0.24,angle=0]{figures/corc-expand1}}\hfill
\subfigure[\scriptsize Refinement step]{
\includegraphics[scale=0.24,angle=0]{figures/corc-expand3}}\hfill
\subfigure[\scriptsize Termination step]{
\includegraphics[scale=0.24,angle=0]{figures/corc-expand4}}\hfill
\caption{An execution of {\ttfamily Co-RC Safety Falsification} algorithm}
\label{fig:corc algo execution}
\end{figure}
\fi
\begin{figure}[htb]
\begin{minipage}[t]{0.95\linewidth}
\algsetup{linenodelimiter= }
\scriptsize
\hrule
{\ttfamily Co-RC Safety Falsification}
($H$, $j_{\mathrm{max}}$, $G $)
\hrule
\begin{algorithmic}[1]
\label{alg:corc}
\STATE $\mathit{Found} \leftarrow \emptyset$
\STATE $\{j_0,G_f.\mathrm{Hull}\} \leftarrow G$.{\ttfamily init($H.\mathcal{S}$)}
\COMMENT{Initialization step}
\STATE $j \leftarrow j_0$,$\mathit{change} \leftarrow \mathrm{false}$
\WHILE {($j \leq j_{\mathrm{max}}  \wedge  \neg \mathit{Found}$)}
\STATE {\ttfamily update\_explorer($j$)} \COMMENT{Update the simulation parameters for current j}
\REPEAT
\STATE $\mathit{change} \leftarrow \mathrm{false}$
\FORALL{($r\in\{1,\ldots,j\}$)} 
\FORALL{($i\in\{0,\ldots,2^{rn}-1\}$)} 
\IF{($\mathit{Found}$)} 
\STATE return $(G_f,\mathit{Found},\varepsilon_j)$  \COMMENT{Terminate if unsafe}
\ENDIF
\STATE $\xi_t \leftarrow ${\ttfamily generate\_sample}($i,r$) \COMMENT{Sample generation}
\IF{({\ttfamily check\_feasible}($\xi_t$))}
\STATE $\mathit{success} \leftarrow$ {\ttfamily expand}($G_f.\mathrm{Hull},\xi_t$) \COMMENT{Expansion step}
\IF{($\mathit{success}$)}
\STATE $\mathit{change} \leftarrow \mathrm{true}$ 
\STATE $G_f.\mathrm{Hull} \leftarrow$ {\ttfamily Co}($G_f.\mathrm{Hull},\xi_t$) \COMMENT{Update the Hull}
\STATE $\mathit{Found} \leftarrow$ {\ttfamily check\_unsafety}($G_f.\mathrm{Hull}$) \COMMENT{ Unsafety check}
\ENDIF
\ENDIF
\ENDFOR
\ENDFOR
\UNTIL{($\neg\mathit{change}$)}
\STATE $j+1 \leftarrow G$.{\ttfamily refine(j)} \COMMENT{Refinement step}
\ENDWHILE
\STATE return $(G_f,\mathit{Found},\varepsilon_j)$
\end{algorithmic}
\hrule
\end{minipage}
\hfill 
\caption{{\ttfamily Co-RC Safety Falsification} algorithm}
\label{fig:algo corc}
\end{figure}
\iflong

{\bf Data structure:} The data structure is the same as one in previous algorithm except that $G_f$ now contains a special node $G_f.\mathrm{Hull}$ that contains the information about the convex hull and which is used for actual expansion step, and unsafety checks.

{\bf Initialization step:} In the first step, the algorithm computes the $j_0$, and, $G_f.\mathrm{Hull}$ is initialized to $\mathcal{S}$. This is done in {\ttfamily init($H.\mathcal{S}$)} function in the algorithm.

{\bf Exploration step:} 
The sample generation and search strategy are similar to previous algorithm.
The function {\ttfamily check\_feasible}($\xi_t$) checks feasibility of $\xi_t$ based on the fact that $\xi_t$ may be contained inside $G_f.\mathrm{Hull}$, i.e, $\xi_t \subset G_f.\mathrm{Hull}$, or, its parent may not be reachable with the current $G_f$. In such a case, the function {\ttfamily check\_feasible}($\xi_t$) returns false.
 If it returns true, then the {\ttfamily expand}($G_f.\mathrm{Hull},\xi_t$) function attempts to solve the $k$-step reachability problem as discussed in Section~\ref{subsec:explorer dynamics cont}. 
 If it is successful, then the {\ttfamily Co}($G_f.\mathrm{Hull},\xi_t$) function updates $G_f.\mathrm{Hull}$ in the function  {\ttfamily Co}($G_f.\mathrm{Hull},\xi_f$). The operator {\ttfamily Co} updates $G_f.\mathrm{Hull}$ to the convex combination of $G_f.\mathrm{Hull}$ and vertices of the cell $\xi_t$.

{\bf Unsafety check:} Each time a new cell is added to $G_f$, intersection of $G_f.\mathrm{Hull}$ and $\mathcal{T}$ is checked in the function {\ttfamily check\_unsafety}($G_f.\mathrm{Hull}$).

{\bf Refinement step:} When all the cells at a given resolution level $j$ have been explored (possibly repeatedly) for reachability from $G_f.\mathrm{Hull}$ and no new cell can be reached, the grid resolution is increased according to the relation
$\varepsilon_{j+1} = \max(\varepsilon_j/2,\varepsilon_{min})$,
and the input bounds are
changed from $l_j$ to $l_{j+1}$.This happens in
the $G$.{\ttfamily refine(j)} function in the algorithm.

{\bf Termination criteria:} To have a finite termination time, the
refinement procedure is allowed only till $j\leq j_{\mathrm{max}}$, and, no counter example has been found.
\else

The algorithm works similar to the {{\ttfamily BFS-BB Safety Falsification} algorithm, except that $G_f$ now contains a special node $G_f.\mathrm{Hull}$ that contains the information about the convex hull and which is used for actual expansion step, and unsafety checks. The {\ttfamily Co}($G_f.\mathrm{Hull},\xi_t$) function updates $G_f.\mathrm{Hull}$ in the function  {\ttfamily Co}($G_f.\mathrm{Hull},\xi_f$). The operator {\ttfamily Co} updates $G_f.\mathrm{Hull}$ to the convex combination of $G_f.\mathrm{Hull}$ and vertices of the cell $\xi_t$.
\fi

We next discuss the resolution completeness of the proposed algorithms.
\subsection{Resolution completeness}
\label{sec:rc proofs}
 We first prove two important lemmas required to prove the main result in Theorem~\ref{thm:rc}.
The first lemma proves that for a given maximum resolution
level $j \geq j_0$\footnote{For $j<j_0$ the input bounds are infeasible by choice of $\varepsilon_0$ as discussed in Section~\ref{subsec:explorer dynamics cont}}, the algorithms terminate in finite time, and the
 second lemma proves the required set inclusion for resolution completeness.
\begin{lemma}
\label{lemma:finite time termination cont}
 Consider a LTI discrete-time hybrid system $H$ and a given $j_{\mathrm{max}} \geq j_0$. Then the 
 Safety Falsification algorithms terminate in finite time.
\end{lemma}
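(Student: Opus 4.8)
The plan is to show that every loop in both algorithms executes only finitely many times and that each elementary operation inside the innermost loop (a feasibility test, an expansion, and an unsafety check) is itself a linear program that terminates in finite time. The finiteness of the loop counts rests on three facts established earlier: the discrete state space $\mathcal{Q}$ is finite; by Assumption~\ref{assmptn: ctb stb cont} (stability) together with Assumption~\ref{assmptn:lin eq cont} (boundedness of $\mathcal{S}(q,\cdot)$ and $\mathcal{G}(q,\cdot)$) each grid $G(q)$ over-approximates a set of finite volume; and each cell at resolution $j$ has strictly positive size $\varepsilon_j(q)>0$ for $j\geq j_0$. Consequently, at any fixed resolution level $j$ the number of cells in $G(q)$, and hence the cardinality of $G_f(q)$, is finite.

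For the {\ttfamily BFS-BB Safety Falsification} algorithm I would argue from the outside in. The outermost WHILE loop executes at most $j_{\mathrm{max}}-j_0+1$ times, since $j$ is incremented at each refinement step and the loop exits once $j>j_{\mathrm{max}}$ (or a counterexample is found). At a fixed $j$, the algorithm processes each location $q\in\mathcal{Q}$, of which there are finitely many; for each location it iterates over $\xi_f\in G_f(q)$ (finite, as above), over depths $r\in\{1,\ldots,j\}$ (finite), and over identifiers $i\in\{0,\ldots,2^{rn}-1\}$ (finite). Each such iteration calls {\ttfamily check\_feasible}, {\ttfamily expand}, and {\ttfamily check\_unsafety}, each of which is a linear program over convex polyhedra by Assumption~\ref{assmptn:lin eq cont} and hence terminates in finite time.

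The subtle point, which I expect to be the main obstacle, is the discrete-transition step: when a newly filled cell enables a guard $\mathcal{G}(q,q')$, the set $\mathcal{G}(q,q')\cap\xi$ is added to $G_f(q')$ and $q'$ is reinserted into $\mathit{locations}$, so locations may be re-activated after having been erased. To rule out an infinite cascade I would invoke monotonicity: once a cell is marked filled it is never removed from $G_f$, so each re-activation of a location is accompanied by at least one genuinely new filled cell. Since the total number of cells across all locations at resolution $j$ is finite, only finitely many new cells can ever be added, which bounds the number of re-activations and therefore the total work performed at level $j$.

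For the {\ttfamily Co-RC Safety Falsification} algorithm the same counting applies to the WHILE loop and the nested FOR loops; the only new construct is the REPEAT--UNTIL loop, which iterates until $\mathit{change}=\mathrm{false}$. Each pass that sets $\mathit{change}=\mathrm{true}$ must have absorbed at least one previously unreached cell into $G_f.\mathrm{Hull}$ through the {\ttfamily Co} operation; since the hull remains contained in the finite-volume reachable set and cells have positive size, only finitely many distinct cells can be absorbed, so the REPEAT--UNTIL loop terminates after finitely many passes. Combining the finite bounds obtained at every nested level then yields finite-time termination of both algorithms.
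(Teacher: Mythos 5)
Your proof is correct and follows essentially the same route as the paper's: both rest on the finite volume of $G$ (from Assumptions~\ref{assmptn: ctb stb cont} and~\ref{assmptn:lin eq cont}), the positive cell size at each resolution $j\geq j_0$, the bounded number of refinements up to $j_{\mathrm{max}}$, and the monotone growth of $G_f$, yielding a finite total cell count and hence finitely many iterations. The only difference is one of explicitness: you spell out the termination of the guard-induced re-activation cascade and of the {\ttfamily REPEAT--UNTIL} loop via the ``each pass adds a genuinely new cell'' argument, whereas the paper absorbs the same reasoning implicitly into its worst-case bounds $N_{\mathrm{cells}}^2$ and $\lvert\mathcal{Q}\rvert^2\max_{q\in\mathcal{Q}}N_{\mathrm{cells}(q)}^2$.
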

\begin{proof}
Consider the \begin{ttfamily}CoRC Safety Falsification\end{ttfamily} algorithm first. The algorithm 
starts with the value $\varepsilon_{j_0}$ initially. If at any stage a counter example is found, finite time
 termination is trivially guaranteed. Otherwise, note that by
 Assumptions~\ref{assmptn: ctb stb cont},~\ref{assmptn:lin eq cont},
G is guaranteed to have a finite volume. Since
there is a given bound $j_{\max}$ on the resolution, it is guaranteed that the
 algorithm will be able to refine the resolution only a finite number
 of times. The sum of the number of cells over all the possible refinements of the grid $G$ is given by
 $N_{\mathrm{cells}}=\frac{2^{n \lceil \varepsilon_{0}/\varepsilon_{j_{\mathrm{max}}}\rceil}-1}{2^n-1}$.  
Hence in the worst case, the algorithm terminates within $N_{\mathrm{cells}}^2$ iterations.
 Now consider the \begin{ttfamily}BFS-BB Safety Falsification\end{ttfamily} algorithm.
 We have $\lvert \mathcal{Q} \rvert$ locations. For each location $q$, let $N_{\mathrm{cells}(q)}$ be the total number of cells over all possible refinements. The number of guards can be no more than $\lvert\mathcal{Q}\rvert^2$. Hence, in the worst case, the algorithm terminates within  $\lvert \mathcal{Q}\rvert ^2 \max_{q\in \mathcal{Q}} N_{\mathrm{cells}(q)}^2$ iterations.
 
\end{proof}
\begin{lemma}
\label{lemma:set inclusion cont}
Consider a LTI discrete-time hybrid system $H$ and a given $j_{\mathrm{max}} \geq j_0$. Then the Safety Falsification algorithms find a
 feasible counter example or else generate
an approximation $\mathcal{R}_{j_\mathrm{max}} $ such that 
$\mathcal{R}^{\circ}(\mathcal{U}_{j_\mathrm{max}})\subseteq \mathcal{R}_{j_\mathrm{max}} \subseteq
\mathcal{R}(\mathcal{U})$.
\end{lemma}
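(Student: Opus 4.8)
The plan is to set $\mathcal{R}_{j_{\mathrm{max}}} = G_f$ as it stands at termination (the union over $q \in \mathcal{Q}$ of all cells marked filled in $G_f(q)$), to dispose of the counter-example case trivially, and otherwise to establish the two inclusions separately. The right inclusion $\mathcal{R}_{j_{\mathrm{max}}} \subseteq \mathcal{R}(\mathcal{U})$ certifies soundness of the approximation, while the left inclusion $\mathcal{R}^{\circ}(\mathcal{U}_{j_{\mathrm{max}}}) \subseteq \mathcal{R}_{j_{\mathrm{max}}}$ certifies its completeness at resolution $j_{\mathrm{max}}$; here $\mathcal{U}_{j_{\mathrm{max}}}$ carries the bound $l_{j_{\mathrm{max}}} = \min_{q \in \mathcal{Q}} \tilde l_{j_{\mathrm{max}}}(q)$ as noted in Section~\ref{subsec:explorer dynamics cont}.

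For the right inclusion I would argue by induction on the number of $k$-step segments used to place a cell into $G_f$. In the base case, the cells added during {\ttfamily init}($H.\mathcal{S}$) come from $\mathcal{S} \cap \mathcal{I}$, which is reachable by definition, so they lie in $\mathcal{R}(\mathcal{U})$. For the inductive step, a cell $\xi$ is marked filled only because {\ttfamily expand}($\xi_f,\xi_t$) found a feasible point $\psi(z_0,u',\bar q,k) \in \xi$ with $z_0 \in \xi_f \subseteq G_f$ and $u' \in \mathcal{U}_j$; thus $\xi \cap \mathcal{R}^{\circ}(\mathcal{U}_j) \neq \emptyset$, and since the cell size obeys the bound of Proposition~\ref{prop:cell size cont}, the whole cell satisfies $\xi \subset \mathcal{R}(\mathcal{U}) \cap \mathcal{I}(q,\cdot)$. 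Composing the full-input reachability of $\xi_f$ from the inductive hypothesis with this one-segment reachability of $\xi$ keeps the point inside $\mathcal{R}(\mathcal{U})$, and the same reasoning applies to cells seeded across a guard $\mathcal{G}(q,q')$ into $G_f(q')$, so every filled cell lies in $\mathcal{R}(\mathcal{U})$.

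For the left inclusion, take any $z \in \mathcal{R}^{\circ}(\mathcal{U}_{j_{\mathrm{max}}})$, so $z = \psi(z_0,u,\bar q,t)$ for some $z_0 \in \mathcal{S}$, $u \in \mathcal{U}_{j_{\mathrm{max}}}$, $\bar q \in \Omega$, and decompose $u$ into $k$-step segments (the step count $k$ satisfying Proposition~\ref{prop:nonemptyint cont}, which is what makes each intermediate cell acquire non-empty interior). I would show $z \in G_f$ by induction on the number of segments. The crucial point is that the exploration bound $\tilde l_{j_{\mathrm{max}}}(q)$ in each location dominates $l_{j_{\mathrm{max}}}$, so every segment of $u$ is admissible for the {\ttfamily expand} linear program; since that program solves $k$-step reachability exactly, any cell $\xi_t$ whose interior meets the segment image is detected as feasible, and the {\ttfamily check\_feasible} branch-and-bound test only prunes cells whose parents are genuinely unreachable, hence never discards a cell carrying an interior-reachable point. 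Because the loops in Fig.~\ref{fig:algo bfs-bb} sweep all identifiers $i$ at every resolution $r \leq j_{\mathrm{max}}$ from each filled $\xi_f$, the cell containing the next segment endpoint is added to $G_f$, completing the induction; for the {\ttfamily Co-RC} variant the same endpoints are captured because $G_f.\mathrm{Hull}$ is enlarged to the convex hull and Proposition~\ref{prop:corc} guarantees that convex combinations remain reachable.

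The main obstacle is the left inclusion, specifically certifying that the branch-and-bound pruning in {\ttfamily check\_feasible} is \emph{sound} — i.e. it never removes a cell that actually contains an interior point of $\mathcal{R}^{\circ}(\mathcal{U}_{j_{\mathrm{max}}})$. This hinges on showing that a cell is pruned only when every coarser parent was conclusively found unreachable via an exact linear-programming feasibility test, so that pruning corresponds to a genuine infeasibility certificate rather than a resolution artifact; once this is in place, the exhaustive sweep over $(r,i)$ together with the finite-time termination of Lemma~\ref{lemma:finite time termination cont} closes the argument.
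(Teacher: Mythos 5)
Your proposal follows essentially the same route as the paper's proof: take $\mathcal{R}_{j_{\mathrm{max}}}=G_f$, get the right inclusion from Propositions~\ref{prop:nonemptyint cont} and~\ref{prop:cell size cont} by composing full-input reachability of the source cell with the one-segment margin argument, get the left inclusion from the exhaustive sweep under the restricted bounds $\tilde l_j(q)\geq l_{j_{\mathrm{max}}}$, and dispose of the {\ttfamily Co-RC} variant via Proposition~\ref{prop:corc}. Your write-up is simply a more explicit, induction-on-segments rendering of the paper's (much terser) argument, and your flagged obstacle --- soundness of the {\ttfamily check\_feasible} pruning --- is a real detail the paper leaves implicit rather than a divergence in approach.
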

\begin{proof}
 For a given $j_{\mathrm{max}}$, $l_{j_{\mathrm{max}}}$ is fixed. The set inclusion
 $\mathcal{R}_{j_{\mathrm{max}}} \subseteq \mathcal{R}(\mathcal{U})$ follows from the discussion in Section~\ref{sec:basic idea}, and Proposition~\ref{prop:nonemptyint cont},
~\ref{prop:cell size cont}. This
 guarantees feasibility of the counter examples found by the algorithms. For a location $q \in \mathcal{Q}$,
if a point is found reachable by the algorithms in a cell
$\xi$, then the algorithms mark that cell as reachable based on the
relaxation of input bounds from $l_{j}(q)$ to $1$, where $j_0\leq j \leq j_{\mathrm{max}}$. As a
result we are guaranteed that when the algorithms terminate, the set inclusion
$\mathcal{R}^{\circ}(\mathcal{U}_{j_{\mathrm{max}}})\subseteq 
\mathcal{R}_{j_{\mathrm{max}}} $ also holds true. Taking convex combinations of reachable cells in the  \begin{ttfamily}CoRC Safety Falsification\end{ttfamily} algorithm does not violate this inclusion. 
\end{proof}
\begin{theorem}
\label{thm:rc}
Consider a LTI discrete-time hybrid system $H$ as in
Lemmas~\ref{lemma:finite time termination cont},~\ref{lemma:set
  inclusion cont}. Then the Safety Falsification algorithms are resolution complete for the system $H$.
\end{theorem}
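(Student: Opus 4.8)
The plan is to assemble Theorem~\ref{thm:rc} directly from the two preceding lemmas together with Proposition~\ref{prop:control approx cont}, reading the theorem as the assertion that the algorithms satisfy the stated Definition of resolution completeness. First I would exhibit the sequence of families of control functions demanded by the definition: Proposition~\ref{prop:control approx cont} supplies $\{\mathcal{U}_j\}_{j=1}^{\infty}$ with $\mathcal{U}_j \subset \mathcal{U}_{j+1}$ and $\lim_{j\to\infty}\mathcal{U}_j = \mathcal{U}$ in the $L_\infty$ metric, which is exactly the nesting-plus-limit structure required. Since the input bound $\tilde l_j(q)$ is only positive once $j \geq j_0$ (cf. the footnote), I would restrict attention to the tail $\{\mathcal{U}_j\}_{j \geq j_0}$, noting that this nested subsequence has the same limit and therefore serves equally well after a harmless reindexing.

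Next, fix an arbitrary target resolution $j_{\mathrm{max}} \geq j_0$. The finite-time termination requirement is immediate from Lemma~\ref{lemma:finite time termination cont}, which already bounds the iteration count (by $N_{\mathrm{cells}}^2$ for the Co-RC algorithm and by $\lvert\mathcal{Q}\rvert^2 \max_{q\in\mathcal{Q}} N_{\mathrm{cells}(q)}^2$ for the BFS-BB algorithm). This discharges one half of the definition for every admissible $j_{\mathrm{max}}$, with no further work.

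The remaining content is the dichotomy at termination. By Lemma~\ref{lemma:set inclusion cont}, the run either returns a feasible counter example or produces $\mathcal{R}_{j_{\mathrm{max}}}$ satisfying $\mathcal{R}^{\circ}(\mathcal{U}_{j_{\mathrm{max}}}) \subseteq \mathcal{R}_{j_{\mathrm{max}}} \subseteq \mathcal{R}(\mathcal{U})$. In the first case the inclusion $\mathcal{R}_{j_{\mathrm{max}}} \subseteq \mathcal{R}(\mathcal{U})$ certifies that the reported state $\psi(z_0,u,\bar{q},t)$ is genuinely reachable with $u \in \mathcal{U}$, so it is a legitimate counter example as required. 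In the second case I would invoke the exhaustiveness of the search to close the gap: the algorithms enumerate every cell of $G$ up to resolution $j_{\mathrm{max}}$ and call {\ttfamily check\_unsafety} on each filled cell, so the absence of a flagged counter example means $\mathcal{R}_{j_{\mathrm{max}}} \cap \mathcal{T} = \emptyset$; combining this with $\mathcal{R}^{\circ}(\mathcal{U}_{j_{\mathrm{max}}}) \subseteq \mathcal{R}_{j_{\mathrm{max}}}$ gives $\mathcal{R}^{\circ}(\mathcal{U}_{j_{\mathrm{max}}}) \cap \mathcal{T} = \emptyset$, which is precisely the safety guarantee in the definition.

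The main obstacle I anticipate is this second case: the lemmas hand over the set inclusions, but translating ``the algorithm found no unsafe cell'' into ``$\mathcal{R}_{j_{\mathrm{max}}} \cap \mathcal{T} = \emptyset$'' relies on the search being exhaustive over the finite cell collection and on {\ttfamily check\_unsafety} testing each filled cell against $\mathcal{T}$ (via the linear programs enabled by Assumption~\ref{assmptn:lin eq cont}). I would make this step explicit rather than leave it implicit, since it is the one place where the control flow of the algorithms, as opposed to the geometry already captured by Propositions~\ref{prop:nonemptyint cont} and~\ref{prop:cell size cont}, actually enters the argument.
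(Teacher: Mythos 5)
Your proposal is correct and follows essentially the same route as the paper: cite Proposition~\ref{prop:control approx cont} for the nested family $\{\mathcal{U}_j\}$, Lemma~\ref{lemma:finite time termination cont} for finite-time termination, and Lemma~\ref{lemma:set inclusion cont} for the dichotomy between a feasible counter example and the inclusion $\mathcal{R}^{\circ}(\mathcal{U}_{j_{\mathrm{max}}})\subseteq \mathcal{R}_{j_{\mathrm{max}}}\subseteq\mathcal{R}(\mathcal{U})$. The only difference is that you make explicit the step from ``no cell was flagged unsafe'' to $\mathcal{R}_{j_{\mathrm{max}}}\cap\mathcal{T}=\emptyset$ via exhaustiveness of the search, which the paper's proof leaves implicit; this is a reasonable clarification rather than a new argument.
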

\begin{proof}
 Let $j_{\mathrm{max}} \geq j_0$ be given. Proposition~\ref{prop:control approx cont} guarantees the existence of required sequence of family of control functions.
 From Lemma~\ref{lemma:set inclusion cont} we know that if a counter example is found it is feasible. If no counter example is found,
 then it implies that there doesn't exist one (using the class of
 control functions $\mathcal{U}_{j_{\mathrm{max}}}$), from the set inclusion
 proved in Lemma~\ref{lemma:set inclusion cont}.
Moreover,, 
Lemma~\ref{lemma:finite time termination cont} guarantees that the
algorithms will terminate in a finite time. 
\end{proof}
\section{Simulation Results}
 \label{sec:experiments}
In this section, we present the simulations results obtained on different discrete time systems.
 The implementation has been carried out in C++
on a Pentium 4, 2.4 GHz machine, with 512 MB of RAM.
We will examine the performance of algorithms presented in Section~\ref{sec:algo bfs-bb} (called as BFS-BB), Section~\ref{sec:algo co-rc} (called as CoRC) and the one presented in~\cite{Bhatia.Frazzoli:07}, which is based on Depth-First-Search with Branch and Bound strategy (called as DFS-BB).
\subsection{Safety falsification of a discrete-time fifth-order system}
 The example presented in this section is mainly intended to
 investigate performance of different algorithms for problems in moderate dimensions. We consider a fifth-order system, with dynamics specified as: 
$x(i+1) = Ax(i)+Bu(i)$, $ A = 0.6065$ I, $B=3.935$ I. The initial set is 
$\mathcal{S}=\{0\}$. The simulation
 parameters are as follows: $\varepsilon_0 = 27.2, j_{\mathrm{max}} = 4$. This corresponds to $\lVert u \rVert \leq 0.5679 $. 
 The unsafe set is a hypercube of size 0.90 and is given by 
$\mathcal{T}= x + [-0.45,0.45]^5$.
 We did 50 test runs each with an unsafe set randomly centered at $x$, where $x \in
 [-7,7]^5$ was chosen using a uniform distribution.
  
  The performance results using different algorithms are shown in Fig.~\ref{fig:perf 5d}. The performance results indicate that when ever a counter example was found, all the algorithms terminated within 5 minutes. The BFS-BB algorithm falsifies safety sooner than other two algorithms. From the results, it is difficult to comment if CoRC performs better than DFS-BB or not. We have also done some profiling of the CoRC algorithm, and have found that almost 70\% of the time is spent in removing the redundant vertices of the hull in the {\ttfamily Co}($G_f.\mathrm{Hull},\xi_f$) function. In our current implementation, we reconstruct the convex hull from scratch every time the {\ttfamily Co}($G_f.\mathrm{Hull},\xi_f$) function is called. We are working on using software libraries that support incremental construction of convex hulls.
\begin{figure}
\centering
\subfigure[\scriptsize Performance of DFS-BB]{
\includegraphics[scale=0.23,angle=0]{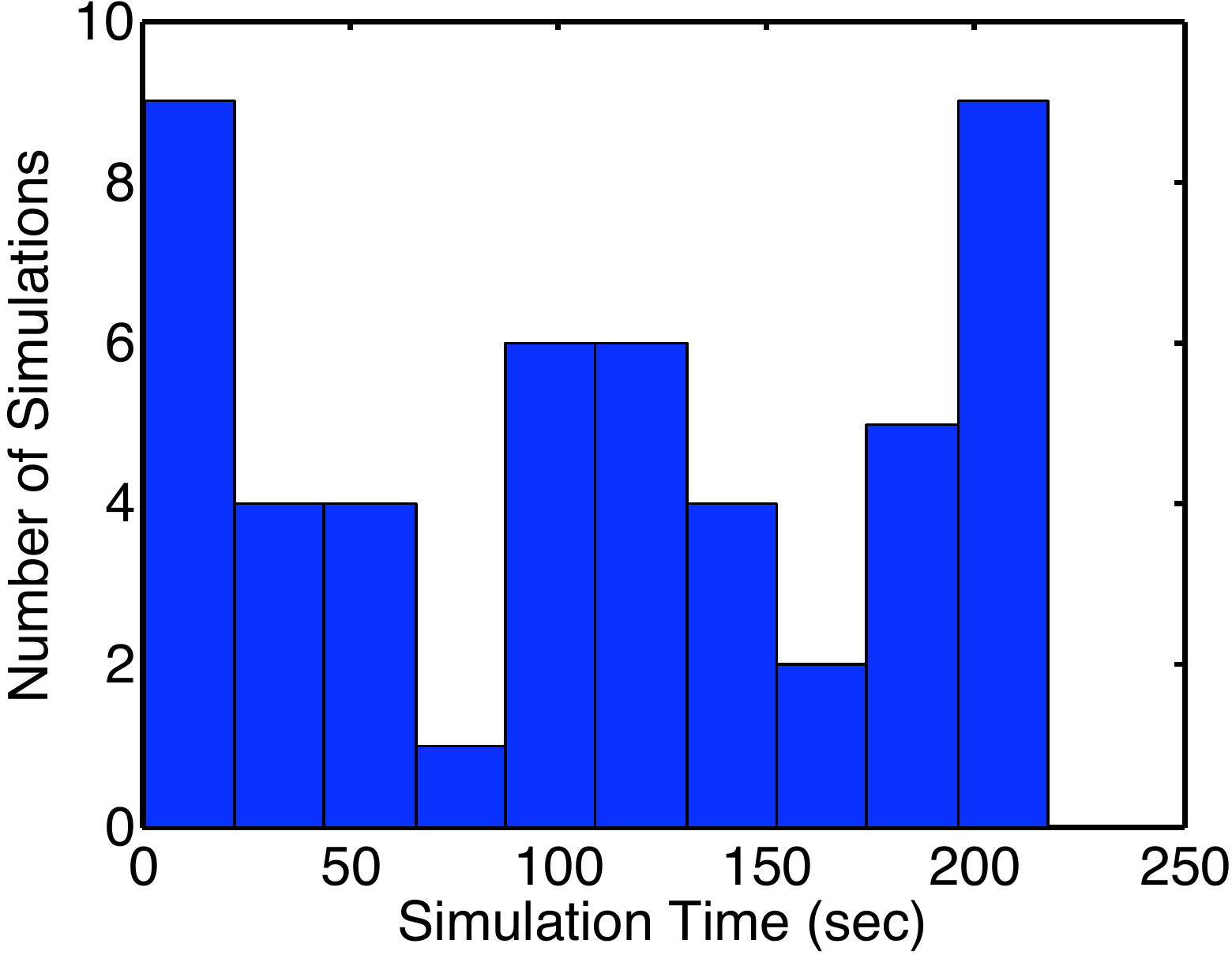}}\hfill
\subfigure[\scriptsize Performance of BFS-BB]{
\includegraphics[scale=0.23,angle=0]{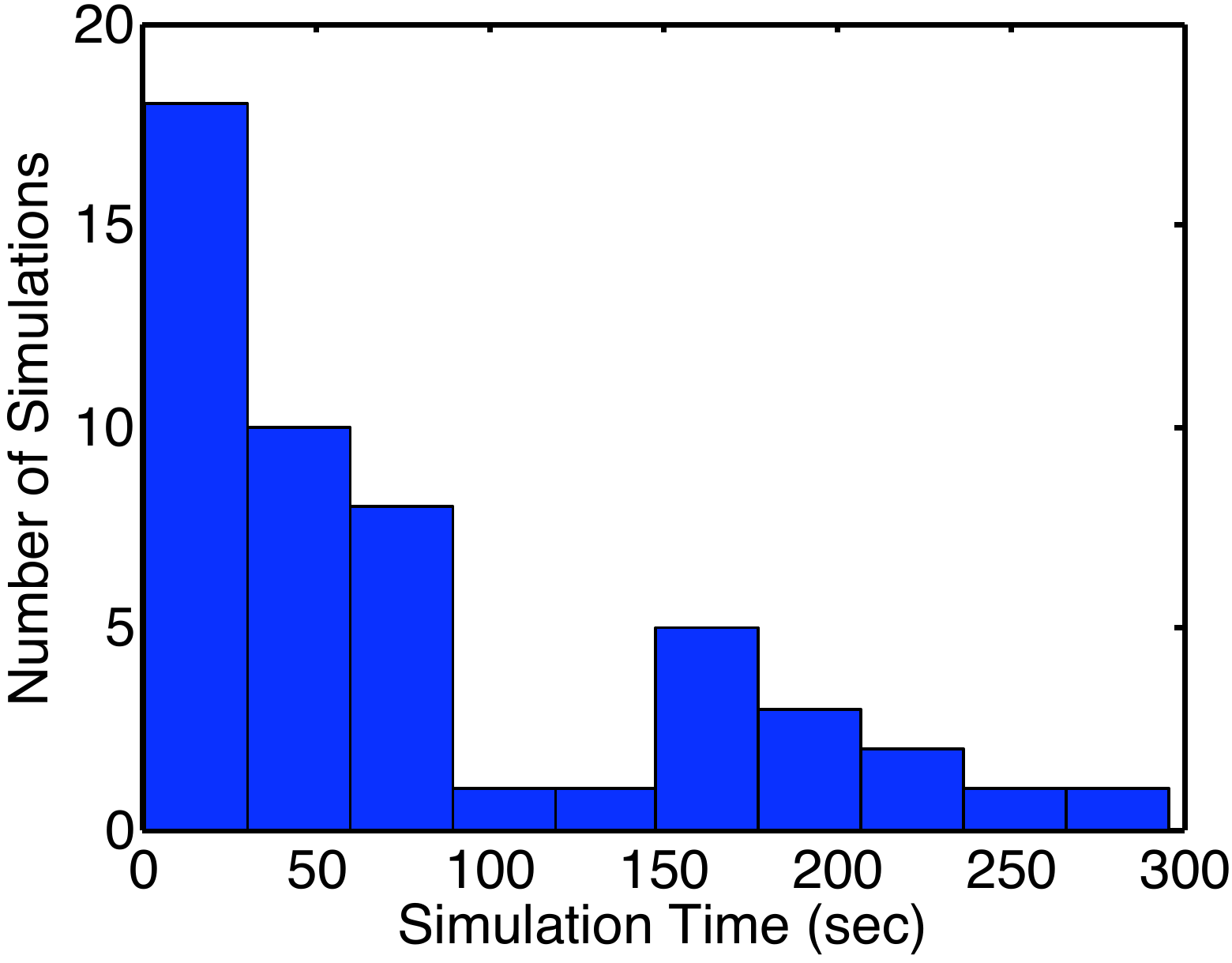}}\hfill
\subfigure[\scriptsize Performance of CoRC]{
\includegraphics[scale=0.23,angle=0]{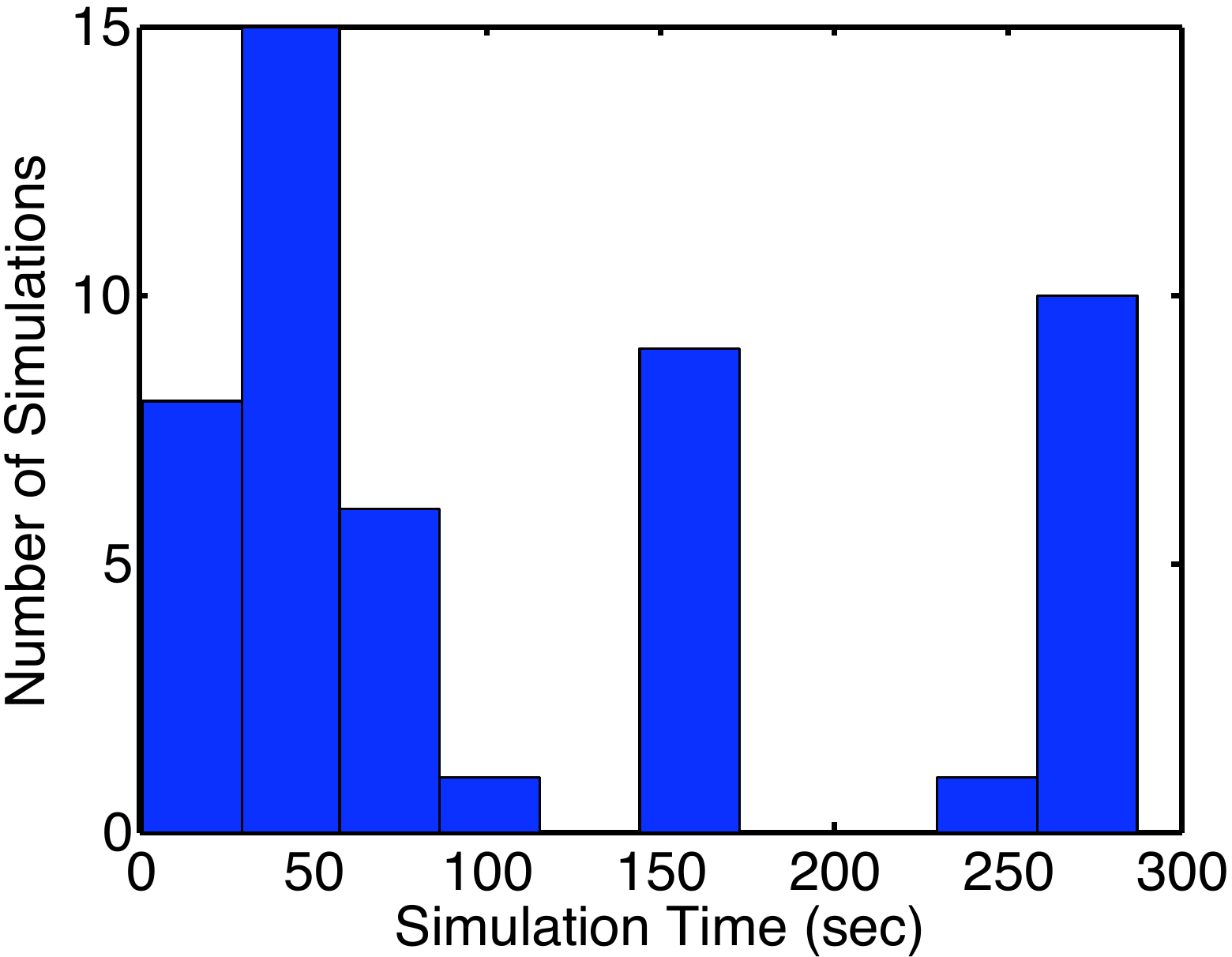}}
\caption{Performance comparison of different algorithms for fifth-order system} 
\label{fig:perf 5d}
\end{figure}
\subsection{Safety falsification of a discrete-time second-order hybrid system}
 We now consider a second-order hybrid system with two discrete states. 
The example is interesting because of the fact that the
 guards and the invariants are all the same, and hence there is a
 potential problem of cycles. The dynamics are specified as 
$x(i+i) = A_qx(i)+B_qu(i), q \in \{1,2\}$, with,
$A_1 = [0.679\; 0.404;-0.674\; 0.140]$,
$B_1 = [0.440; -0.213]$,
$A_2=[0.679\;-0.404;0.674\;0.140]$,
$B_2 = [0.3486;-0.1628]$.
$\mathcal{I}(1,\cdot) =
 \mathcal{I}(2,\cdot)=\mathcal{G}(1,2)=\mathcal{G}(2,1) = [-2,2]^2$. 
The initial set is $\mathcal{S}=0 \times [-0.1248,0.1248]^2$. The simulation
parameters are $\varepsilon_0=4, j_{\mathrm{max}} =7$. 
Corresponding to this $\lVert u \rVert \leq 0.792$ for $ q=0$, and 
$\lVert u \rVert \leq 0.869$ for $ q=1$. Hence, for the case when no counter
example is found, completeness will be guaranteed for $\lVert u \rVert \leq 0.792$.

The simulations are run for two cases based on the size
 of the unsafe set. In Case I, the unsafe set is small which
 makes it more difficult to be found by the explorer. However, quite
 often the unsafe set is specified as a
 large set (e.g. a half space representing the separation between two cars or
 aircrafts). To evaluate the performance of the algorithm for such
 cases, we present test runs in Case II. We did 200 runs for both the cases.\\
{\bf Case I: Small unsafe sets:}
For this case, the unsafe set is given by:
$\mathcal{T}= q_r \times \{x + [-0.1,0.1]^2\}$, where $x \in
 [-1.5,1.5]^2$, was chosen using a uniform distribution and $q_r \in \{0,1\}$.\\
{\bf Case II: Large unsafe sets:}
For this case, we chose the unsafe set as a randomly oriented half
space, given by: $\mathcal{T}=q_r\times\{(x_1,x_2): mx_1+ax_2\leq c\}$, where, 
$m\in[-3.73,3.73], c\in[-1.9,0]$,$q_r \in \{0,1\}$, and $a = -\operatorname{sgn}(c)$. $c,m$ are chosen from a uniform distribution such that $\mathcal{S}\cap \mathcal{T} = \emptyset$.
  
 Fig.~\ref{fig:unsafety_example_hcube-unsafety_perf_hcube} and
 Fig.~\ref{fig:unsafety_example_hspace-unsafety_perf_hspace} show
 performance results for the two cases along with a run when a counter
 example was found. Both the DFS-BB and BFS-BB algorithms falsify safety sooner for Case II.
 The BFS-BB falsifies safety in less than 10 seconds for 68 cases, compared to 54 for DFS-BB in Case I, and 164 times compared to 149 times for Case II. 
\begin{figure}
\centering
\subfigure[\scriptsize Example for Case 1]{
\includegraphics[scale=0.23,angle=0]{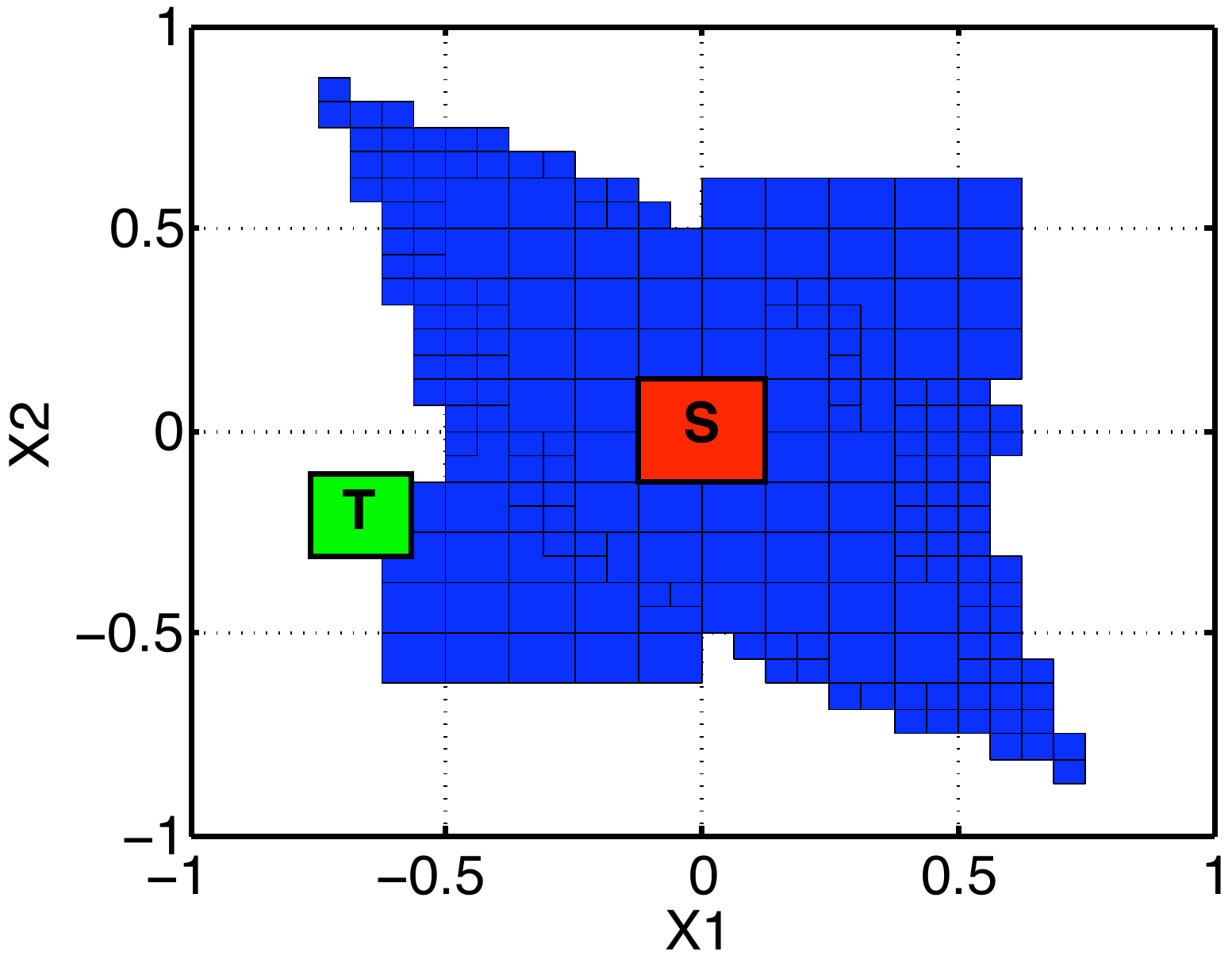}}\hfill
\subfigure[\scriptsize Performance of DFS-BB ]{
\includegraphics[scale=0.23,angle=0]{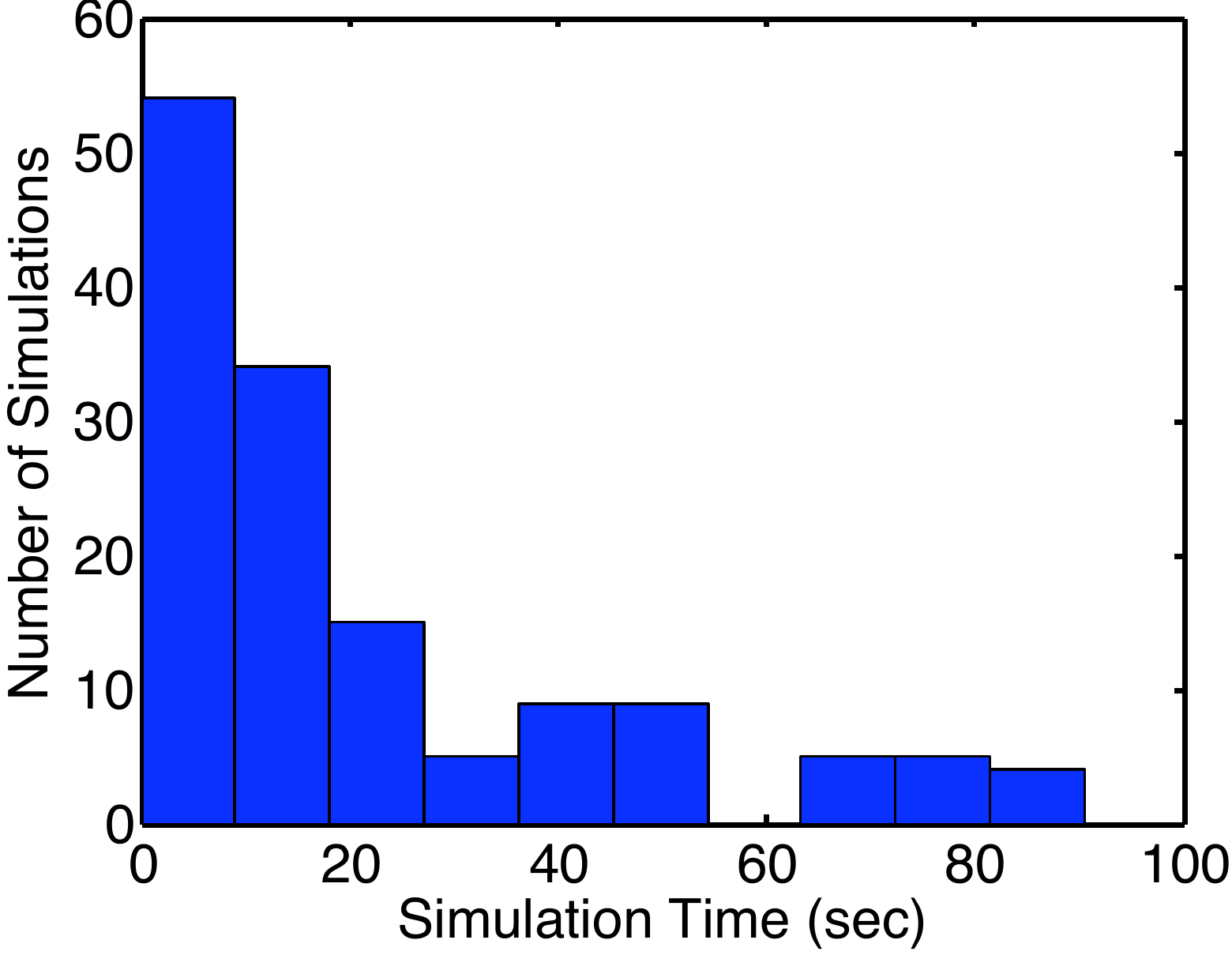}}\hfill
\subfigure[\scriptsize Performance of BFS-BB]{
\includegraphics[scale=0.23,angle=0]{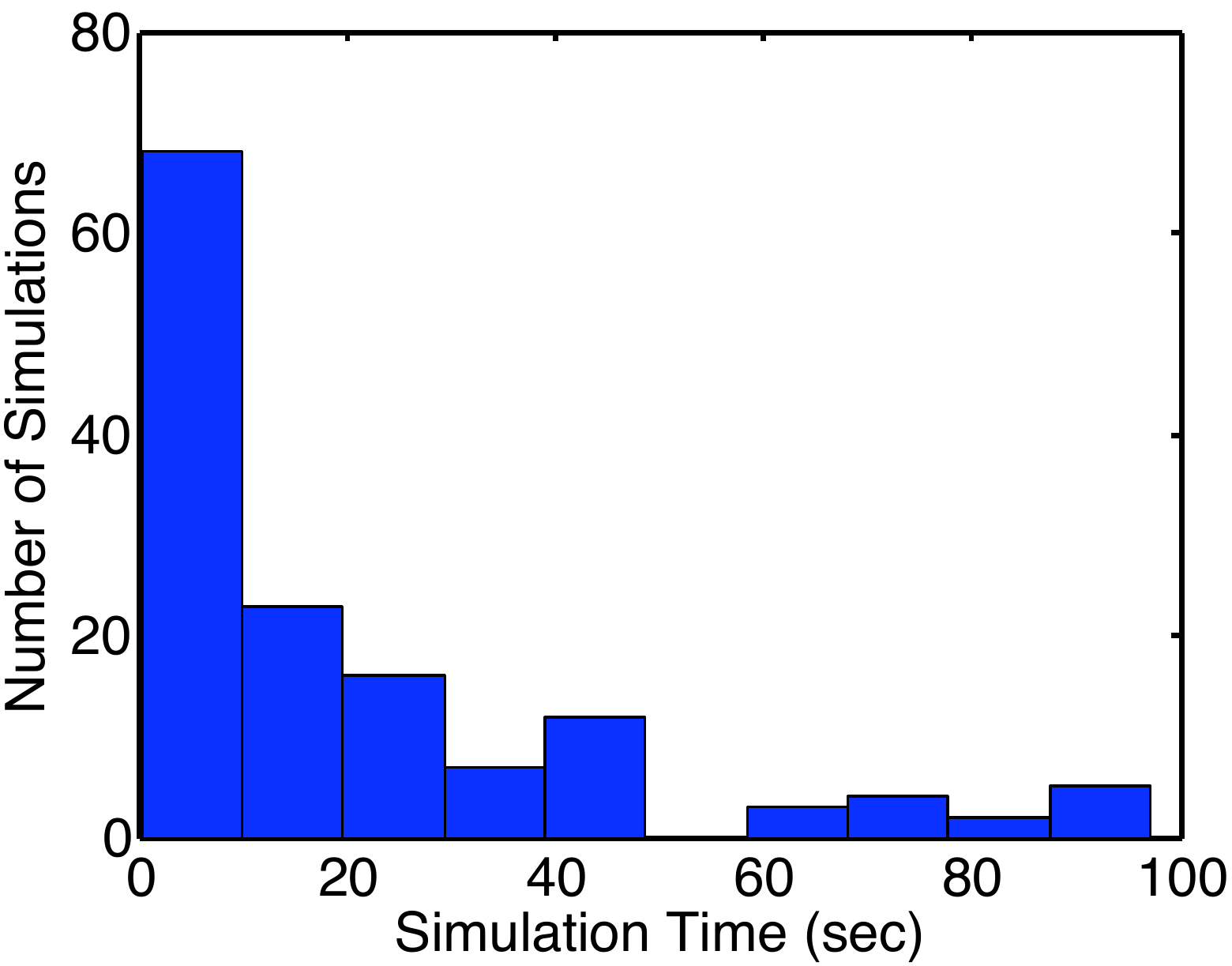}}
\caption{Safety falsification example and performance for Case I}
\label{fig:unsafety_example_hcube-unsafety_perf_hcube}
\end{figure}
\iflong \else \vspace{-0.5 in} \fi
\begin{figure}
\centering
\subfigure[\scriptsize Example for Case II]{
\includegraphics[scale=0.23,angle=0]{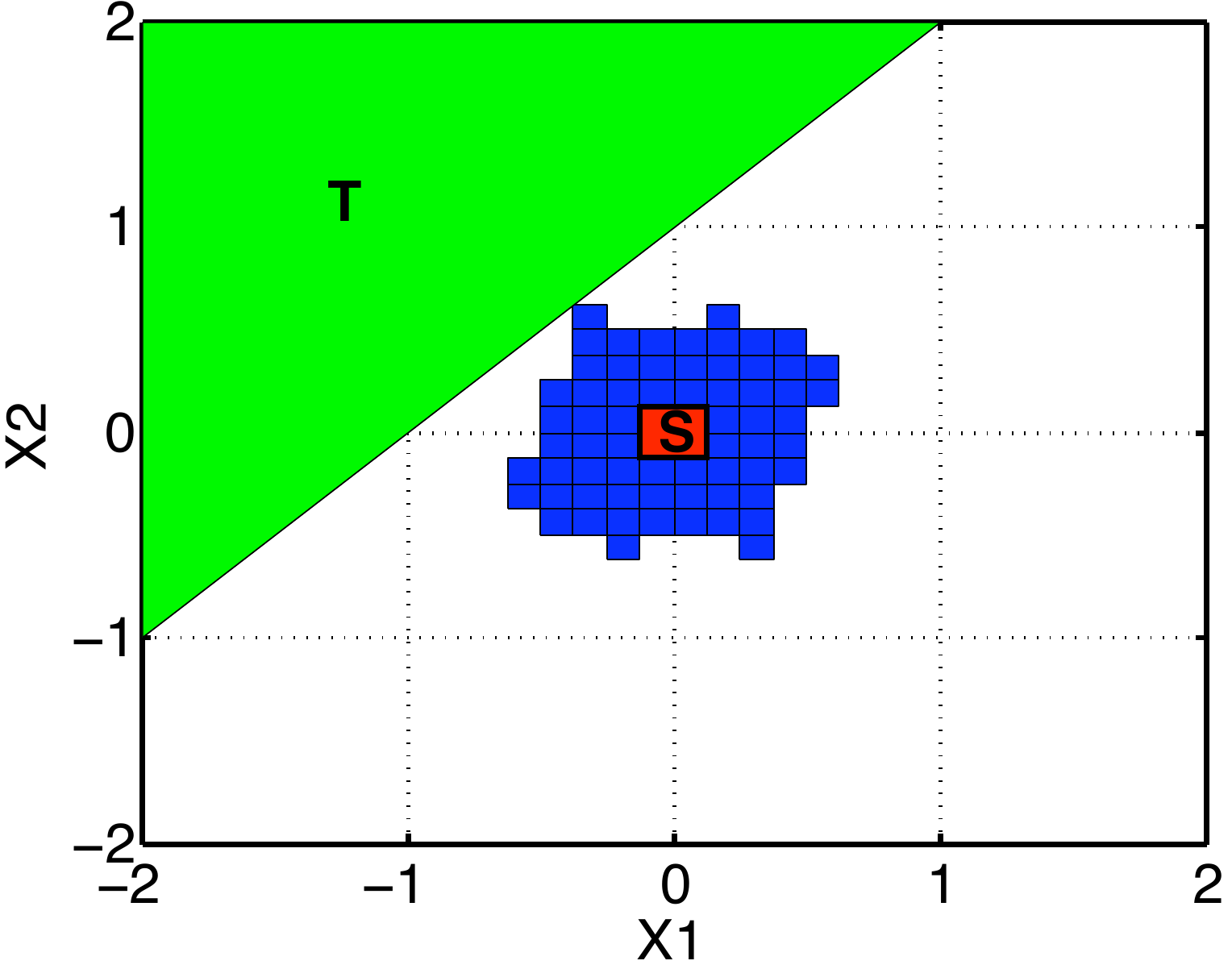}}\hfill
\subfigure[\scriptsize Performance of DFS-BB]{
\includegraphics[scale=0.23,angle=0]{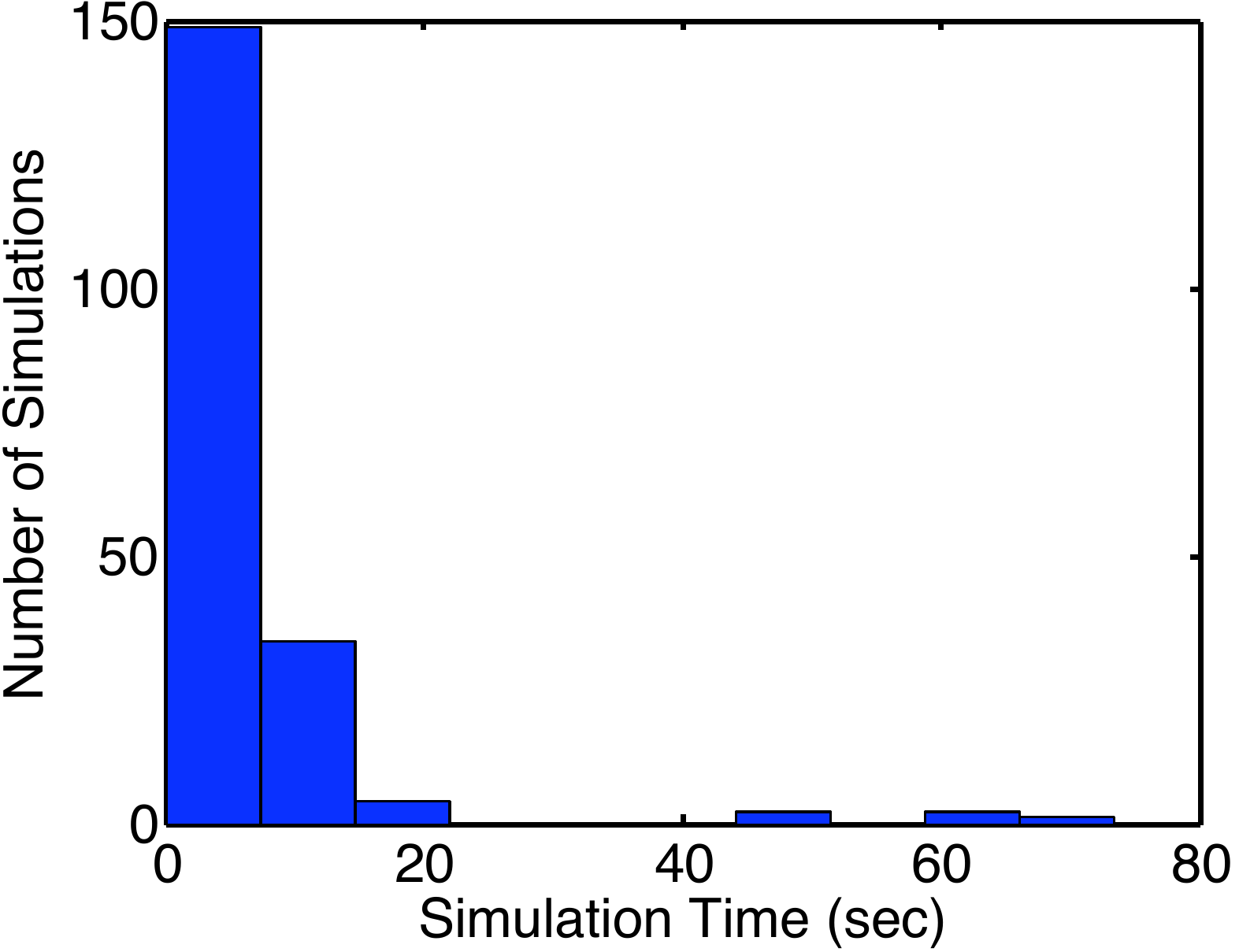}}\hfill
\subfigure[\scriptsize Performance of BFS-BB]{
\includegraphics[scale=0.23,angle=0]{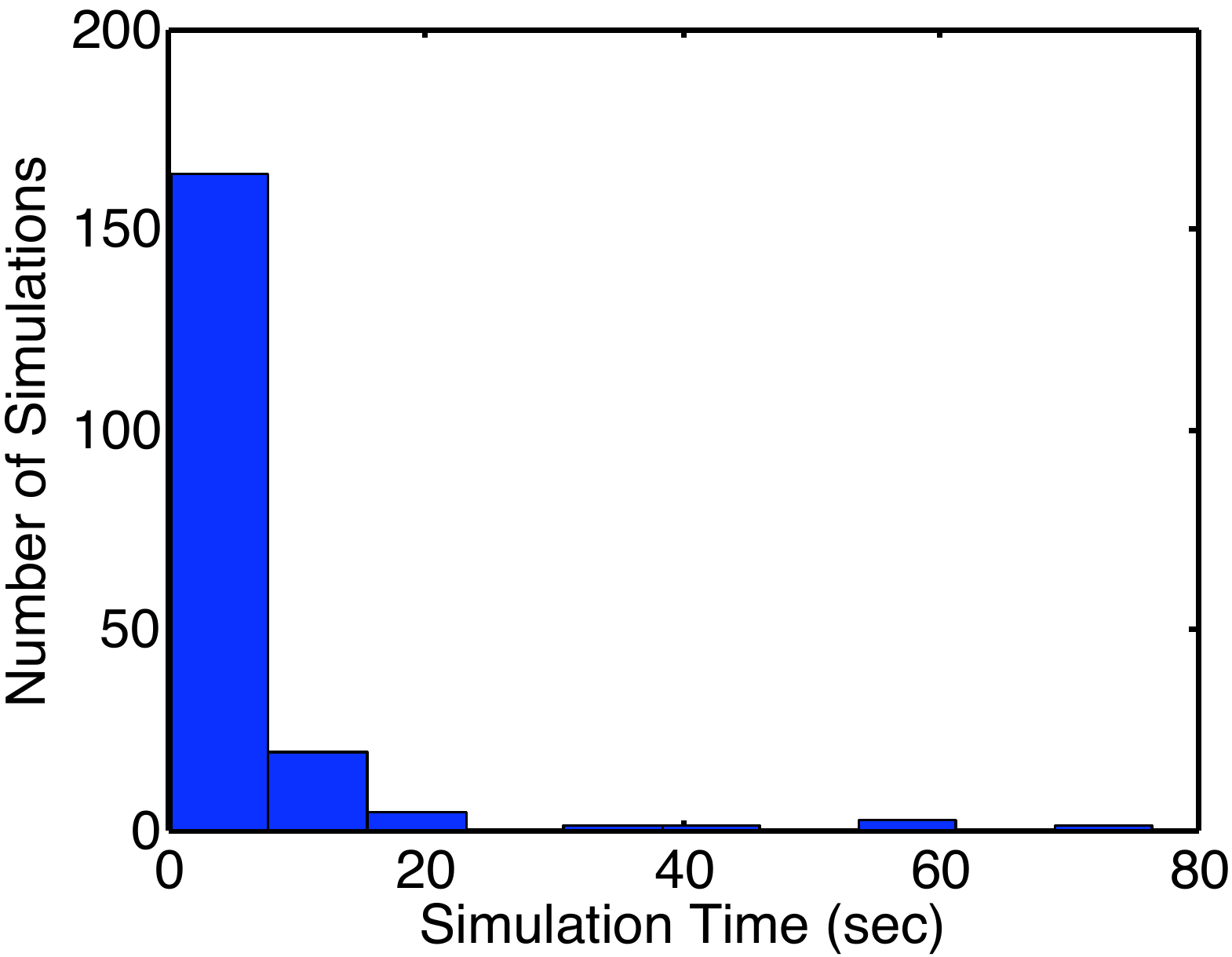}}
\caption{Safety falsification example and performance for Case II}
\label{fig:unsafety_example_hspace-unsafety_perf_hspace}
\end{figure}
\iflong \else \vspace{-0.3 in} \fi
\section{Conclusions}\label{sec:conclusions}
 In this paper, we have presented sampling-based resolution-complete algorithms for safety falsification of linear time invariant discrete-time systems over infinite time horizon. 
The algorithms attempt to
 generate a legitimate counter example by
 incrementally building feasible trajectories in the state space at
 increasing levels of resolution or
 provides a guarantee in a finite time that no such example exists,
 when the input is restricted to a certain class. As an additional result, when no
 counter example is found, the algorithms provide us with an
 arbitrarily good under approximation
 to the reachable set whose quality is independent of 
 length of trajectories.
 Efforts are currently underway to develop more efficient algorithms\iflong that combine the nice features of both the depth-first-search and the breadth-first-search strategies to explore the state space.\else . \fi
 We are also investigating if its possible to develop similar algorithms for nonlinear systems. 

\section{Acknowledgements}
We are grateful to S. LaValle, P. Cheng, S. Lindermann, and M. Branicky 
for stimulating discussion on the subject of this paper. We would also
like to thank R. Majumdar for providing constructive suggestions on issues related to 
algorithmic complexity and improvements and F. Borrelli for providing useful suggestions on implementation of multi-parametric
optimization in our work.\iflong We would also like to thank S. Lindermann for providing us with the code used to generate optimal orderings based on mutual distance, and R. Sanfelice for useful suggestions on the presentation of this paper.
\else
We would also like to thank R. Sanfelice for useful suggestions on the presentation of this paper.
\fi
The research leading to this
work was supported by the National Science Foundation (grants number 0715025 and 0325716).
\bibliography{references}

\begin{thebibliography}{10}

\bibitem{Kesten.Pnueli.ea:93}
Yonit Kesten, Amir Pnueli, Joseph Sifakis, and Sergio Yovine.
\newblock Integration graphs: A class of decidable hybrid systems.
\newblock In {\em Hybrid Systems}, pages 179--208. Springer-Verlag, 1993.

\bibitem{Alur.Courcoubetis.ea:95}
R.~Alur, C.~Courcoubetis, N.~Halbwachs, T.A. Henzinger, P.H. Ho, X.~Nicollin,
  A.~Olivero, J.~Sifakis, and S.~Yovine.
\newblock The algorithmic analysis of hybrid systems.
\newblock {\em Theoretical Computer Science}, 138:3--34, 1995.

\bibitem{Henzinger.Ho:95}
T.A. Henzinger and P.~H. Ho.
\newblock {HyTech}: the {Cornell Hybrid Technology Tool}.
\newblock In P.~J. Antsaklis, W.~Kohn, A.~Nerode, and S.~Sastry, editors, {\em
  Hybrid Systems II}, volume 999 of {\em Lecture Notes in Computer Science}.
  Springer-Verilag, 1995.

\bibitem{Alur.Dang.ea:02}
R.~Alur, T.~Dang, and F.~Ivan\u{c}i\'{c}.
\newblock Reachability analysis of hybrid systems via predicate abstraction.
\newblock In C.~J. Tomlin and M.~R. Greenstreet, editors, {\em Hybrid Systems:
  Computation and Control}, Lecture Notes in Computer Sciences, pages 35--48.
  Springer, 2002.

\bibitem{Prajna.Jadbabaie:04}
Stephen Prajna and Ali Jadbabaie.
\newblock Safety verification of hybrid systems using barrier certificates.
\newblock In Rajeev Alur and George~J. Pappas, editors, {\em Hybrid Systems:
  Computation and Control}, volume 2993 of {\em Lecture Notes in Computer
  Science}, pages 477--492. Springer, 2004.

\bibitem{Chutinan.Krogh:99}
A.~Chutinan and B.~H. Krogh.
\newblock Verification of polyhedral-invariant hybrid automata using polygonal
  flow pipe approximations.
\newblock In {\em Hybrid Systems: Computation and Control}, volume 1569.
  Springer, 1999.

\bibitem{Dang.Asarin.ea.:02}
Eugene Asarin, Olivier Bournez, Thao Dang, and Oded Maler.
\newblock The d/dt tool for verification of hybrid systems.
\newblock In {\em Proceedings of the Conference on Computer-Aided
  Verification}, volume 2404 of {\em Lecture Notes in Computer Science}, pages
  365--370. Springer, 2002.

\bibitem{Kurzhanski.Varaiya:00}
A.~B. Kurzhanski and P.~Varaiya.
\newblock Ellipsoidal techniques for reachability analysis.
\newblock In {\em Hybrid Systems: Control and Computation}, volume 1790 of {\em
  Lecture Notes in Computer Science}. Springer-Verlag, 2000.

\bibitem{Mitchell.Tomlin:00}
I.~Mitchell and C.~Tomlin.
\newblock Level set methods for computation in hybrid systems.
\newblock In {\em Hybrid Systems: Computation and Control}, volume 1790.
  Springer, 2000.

\bibitem{Girard:05}
Antoine Girard.
\newblock Reachability of uncertain linear systems using zonotopes.
\newblock In Manfred Morari, Lothar Thiele, and Francesca Rossi, editors, {\em
  Hybrid Systems: Computation and Control}, volume 3414 of {\em Lecture Notes
  in Computer Science}, pages 291--305. Springer-Verilag, 2005.

\bibitem{Ratschan.She:07}
Stefan Ratschan and Zhikun She.
\newblock Safety verification of hybrid systems by constraint propagation-based
  abstraction refinement.
\newblock {\em ACM Trans. Embedded Comput. Syst.}, 6(1), 2007.

\bibitem{Krogh.Silva.ea.:01}
B.~I. Silva, O.~Stursberg, B.~H. Krogh, and S.~Engell.
\newblock An assessment of the current status of algorithmic approaches to the
  verification of hybrid systems.
\newblock In {\em Proceedings of the 40th Annual Conference on Decision and
  Control}, volume~3, pages 2867 -- 2874, 2001.

\bibitem{Bhatia.Frazzoli:04}
Amit Bhatia and Emilio Frazzoli.
\newblock Incremental search methods for reachability analysis of continuous
  and hybrid systems.
\newblock In Rajeev Alur and George~J. Pappas, editors, {\em Hybrid Systems:
  Computation and Control}, volume 2993 of {\em Lecture Notes in Computer
  Science}, pages 142--156. Springer, 2004.

\bibitem{Belta.Esposito.Kumar:05}
C.~Belta, J.~M. Esposito, J.~Kim, and R.~V. Kumar.
\newblock Computational techniques for analysis of genetic network dynamics.
\newblock {\em International Journal of Robotics Research}, 24(2-3):219--235,
  2005.

\bibitem{Donze.Maler:07}
Alexandre Donz\'e and Oded Maler.
\newblock Systematic simulation using sensitivity analysis.
\newblock In Alberto Bemporad, Antonio Bicchi, and Giorgio~C. Buttazzo,
  editors, {\em Hybrid Systems: Control and Computation}, volume 4416 of {\em
  Lecture Notes in Computer Science}, pages 174--189. Springer, 2007.

\bibitem{Nahhal.Dang:07}
Tarik Nahhal and Thao Dang.
\newblock Guided randomized simulation.
\newblock In Alberto Bemporad, Antonio Bicchi, and Giorgio~C. Buttazzo,
  editors, {\em Hybrid Systems: Control and Computation}, volume 4416 of {\em
  Lecture Notes in Computer Science}, pages 731--735. Springer, 2007.

\bibitem{LaValle.Kuffner:01}
S.~M. LaValle and J.~J. Kuffner.
\newblock Rapidly-exploring random trees: Progress and prospects.
\newblock In B.~R. Donald, K.~M. Lynch, and D.~Rus, editors, {\em Algorithmic
  and Computational Robotics: New Directions}, pages 293--308. A.K. Peters,
  Wellesley, MA, 2001.

\bibitem{Cheng.LaValle:04b}
P.~Cheng and S.~M. La{V}alle.
\newblock Resolution completeness for sampling-based motion planning with
  differential constraints.
\newblock {\em International Journal of Robotics Research}, 2004.
\newblock Submitted.

\bibitem{Goldberg:95}
K.~Goldberg.
\newblock Completeness in robot motion planning.
\newblock In {\em Workshop on Algorithmic Foundations of Robotics}, pages
  419--429, 1994.

\bibitem{Bhatia.Frazzoli:06}
Amit Bhatia and Emilio Frazzoli.
\newblock Resolution complete safety falsification of continuous time systems.
\newblock In {\em Conference on Decision and Control}, 2006.

\bibitem{Bhatia.Frazzoli:07}
Amit Bhatia and Emilio Frazzoli.
\newblock Sampling-based resolution-complete safety falsification of linear
  hybrid systems.
\newblock In {\em Conference on Decision and Control}, 2007.

\bibitem{Cheng.Kumar:06}
P.~Cheng and V.~Kumar.
\newblock Sampling-based falsification and verification of controllers for
  continuous dynamic systems.
\newblock In {\em Workshop on Algorithmic Foundations of Robotics}, 2006.

\bibitem{Lin.Yer.Lav:04}
S.~R. Lindemann, A.~Yershova, and S.~M. LaValle.
\newblock Incremental grid sampling strategies in robotics.
\newblock In {\em Proceedings Workshop on Algorithmic Foundations of Robotics},
  pages 297--312, 2004.

\bibitem{Girard:07}
Antoine Girard.
\newblock Approximately bisimilar finite abstractions of stable linear systems.
\newblock In Alberto Bemporad, Antonio Bicchi, and Giorgio~C. Buttazzo,
  editors, {\em Hybrid Systems: Control and Computation}, volume 4416 of {\em
  Lecture Notes in Computer Science}, pages 231--244. Springer, 2007.

\bibitem{Kuhn:98}
W.~K\"uhn.
\newblock Rigorously computed orbits of dynamical systems without the wrapping
  effect.
\newblock {\em Computing}, 61(1):47--67, 1998.

\bibitem{Borelli.Bemporad.ea:03}
F.~Borrelli, A.~Bemporad, and M.Morari.
\newblock A geometric algorithm for multi-parametric linear programming.
\newblock {\em Journal of Optimization Theory and Applications}, 118:515--540,
  2003.

\end{thebibliography}

\end{document}